\documentclass{amsart}
\usepackage{graphicx} 
\usepackage{subfig}
\usepackage{tabularx}
\usepackage[british]{babel}

\usepackage{transparent} 
\usepackage{eso-pic} 
\usepackage{subfig} 
\usepackage{tikz} 
\usetikzlibrary{}
\graphicspath{{./Images/}} 
\usepackage{caption} 
\usepackage{xcolor} 
\usepackage{amsthm,thmtools,xcolor} 
\usepackage{float}
\usepackage{todonotes}

\usepackage{geometry}

\usepackage[utf8]{inputenc}
\usepackage[T1]{fontenc}
\usepackage{mathtools}
\usepackage[thinc]{esdiff}

\usepackage{amsmath}
\usepackage{amsthm}
\usepackage{amssymb}
\usepackage{amsfonts}
\usepackage{bm}
\usepackage[overload]{empheq} 
\usepackage{fix-cm} 
\usepackage{algorithm}
\usepackage{algorithmic}

\newtheorem{lemma}{Lemma}[section]  
\newtheorem{theorem}[lemma]{Theorem}  
\newtheorem{proposition}[lemma]{Proposition}
\newtheorem{remark}[lemma]{Remark}

\newtheorem{corollary}[lemma]{Corollary}
\newtheorem{definition}[lemma]{Definition}

\usepackage[colorlinks=true,linkcolor=black,anchorcolor=black,citecolor=black,filecolor=black,menucolor=black,runcolor=black,urlcolor=black]{hyperref} 
\usepackage{cleveref}
\crefname{lemma}{Lemma}{Lemmas}
\crefname{theorem}{Theorem}{Theorems}
\crefname{proposition}{Proposition}{Propositions}
\crefname{remark}{Remark}{Remarks}
\crefname{problem}{Problem}{Problems}
\crefname{corollary}{Corollary}{Corollaries}
\crefname{definition}{Definition}{Definitions}

\crefname{section}{Section}{Sections}
\usepackage{graphicx}
\usepackage{float}
\usepackage[square, numbers, sort&compress]{natbib} 

\def\R{{\mathbb{R}}}
\def\erre{{\mathbb{R}}}
\def\enne{{\mathbb{N}}}
\def\esse{{\mathbb{S}}}

\title{Arnold stability and rigidity in Zeitlin's model of hydrodynamics}
\author{Luca Melzi}
\address{Department of Mathematics, Imperial College London}
\email{l.melzi24@imperial.ac.uk}
\author{Klas Modin}
\address{Department of Mathematical Sciences, Chalmers University of Technology and University of Gothenburg}
\email{klas.modin@chalmers.se}
\date{\today}

\subjclass[2020]{35B35, 37J25, 37K45, 35Q31, 70H14, 65M22}
\keywords{Arnold method, Lyapunov stability, Zeitlin model, 2-D Euler equations, Lie--Poisson systems}

\begin{document}

\begin{abstract}
    Zeitlin's model is a discretisation of the 2-D Euler equations that preserves the underlying geometric structure.
    This feature makes it suitable for studying the qualitative behaviour of the dynamics.
    Here, we utilise Arnold's geometric approach to prove Lyapunov stability of steady states in Zeitlin's model.
    Furthermore, we show that such Arnold stable stationary solutions are subject to a rigidity condition that enforces a specific form of the matrix describing the state.
    Our argument relies on matrix theory and is therefore detached, and conceptually different, from the nonlinear stability analysis as developed for the 2-D Euler equations.
    Nevertheless, our results concur with those known for the 2-D Euler equations, which hints at links between matrix theory and nonlinear PDE techniques.
    Furthermore, our results show that the Zeitlin's model, as a numerical discretisation, is reliable for studying stationary solutions.
\end{abstract}

\maketitle

\section{Introduction}

The motion of an ideal, incompressible fluid evolving on a two-dimensional (2-D) Riemannian manifold $M$ is described by the Euler equations in vorticity form
\begin{equation}
      \dot\omega + \{\psi,\omega\}=0,\quad\Delta\psi=\omega,
      \label{eq:euler}
\end{equation}
equipped with initial conditions and, if needed, boundary conditions.
Here, $\omega$ and $\psi$ are the vorticity and stream function, and we denote the Poisson bracket by $\{\omega,\psi\}=\nabla^\perp \omega\cdot\nabla \psi$, where $\nabla^\perp$ is the skew gradient relative to the symplectic structure induced by the area form. 

Experimental and numerical evidence suggests a tendency of the fluid to organise (after an intermediate transient) into large-scale coherent structures, corresponding to special states such as steady states or periodic or quasi-periodic orbits.
Based thereon, 
Šverák~\cite{Sverak2012} and Shnirelman~\cite{Shnirelman2013} formalised the setting mathematically and gave conjectures which can be summarised as follows: generic orbits experience loss of compactness in $L^2$, relaxing in infinite time into special states whose orbits are compact in $L^2$ (see \cite[Section 3.4]{Drivas2023}).
Whereas these statements concern infinite-dimensional mathematical concepts, carefully conducted numerical simulations provide statistical counterparts to the statements supporting the conjectures (see Figure~\ref{fig:01}).

In 1966, Arnold~\cite{Ar1966} showed that solutions of the equations~\eqref{eq:euler}, or more generally incompressible Euler equations on a Riemannian manifold of any dimension, describe geodesics on the infinite-dimensional group of volume-preserving diffeomorphisms equipped with a right-invariant Riemannian metric.
This insight gave new tools for studying the behaviour of solutions, for example, in terms of sectional curvature, which Arnold showed is mostly negative~\cite{ArKh1998}, or for existence theory as developed by Ebin and Marsden~\cite{EbMa1970}.
Although Arnold's paper focuses on hydrodynamics and the Euler equations, the developments are carried out in an abstract framework as geodesic motion on a Lie group equipped with a left- or right-invariant Riemannian metric.
This setting gives rise to Hamiltonian Lie--Poisson systems on the dual of the Lie algebra of the Lie group.
Such systems are today at the centre stage of the field called geometric hydrodynamics (cf.~\cite{ArKh1998}).
In his 1966 paper, Arnold also derived conditions for nonlinear (Lyapunov) stability of stationary solutions.

\begin{figure}
    \captionsetup[subfigure]{labelformat=empty}
    \captionsetup[subfigure]{width=0.24\textwidth}
    \captionsetup[subfigure]{font=small}
    \centering
    \subfloat[initial]{\includegraphics[width=0.24\textwidth]{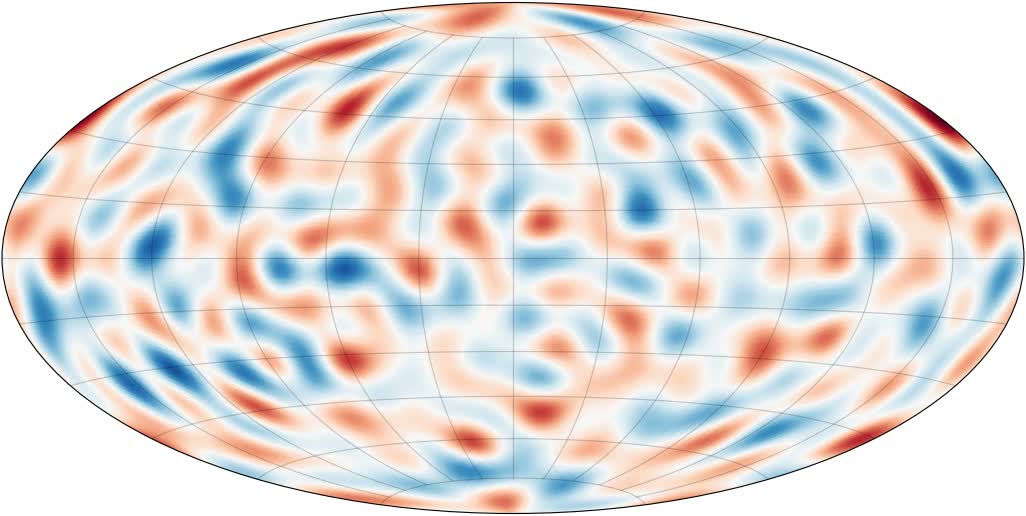}}\,
    \subfloat[first intermediate]{\includegraphics[width=0.24\textwidth]{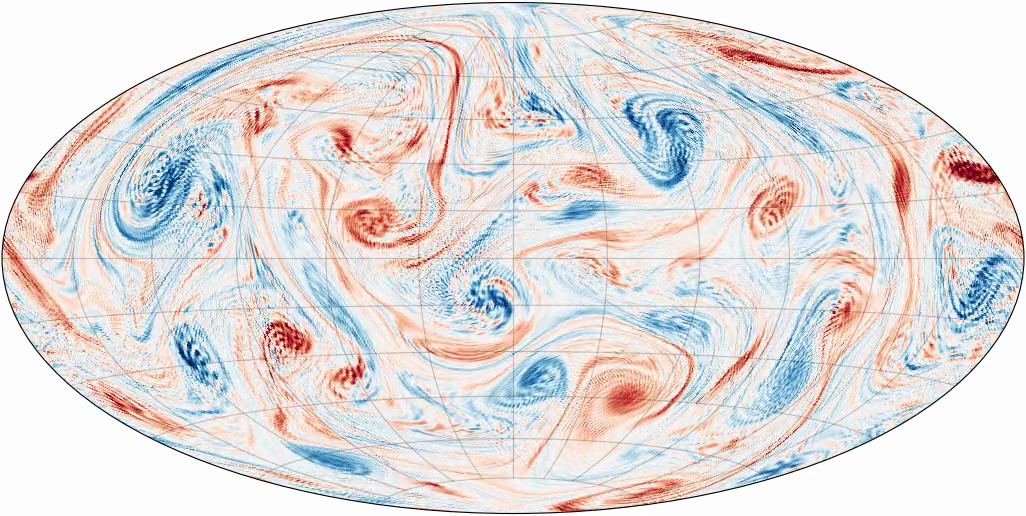}}\,
    \subfloat[second intermediate]{\includegraphics[width=0.24\textwidth]{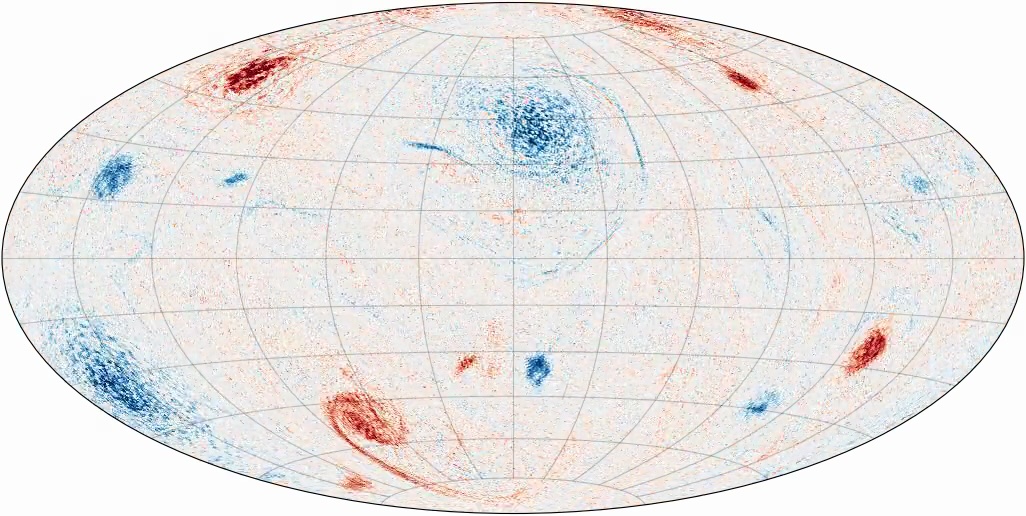}}\,
    \subfloat[long-term]{\includegraphics[width=0.24\textwidth]{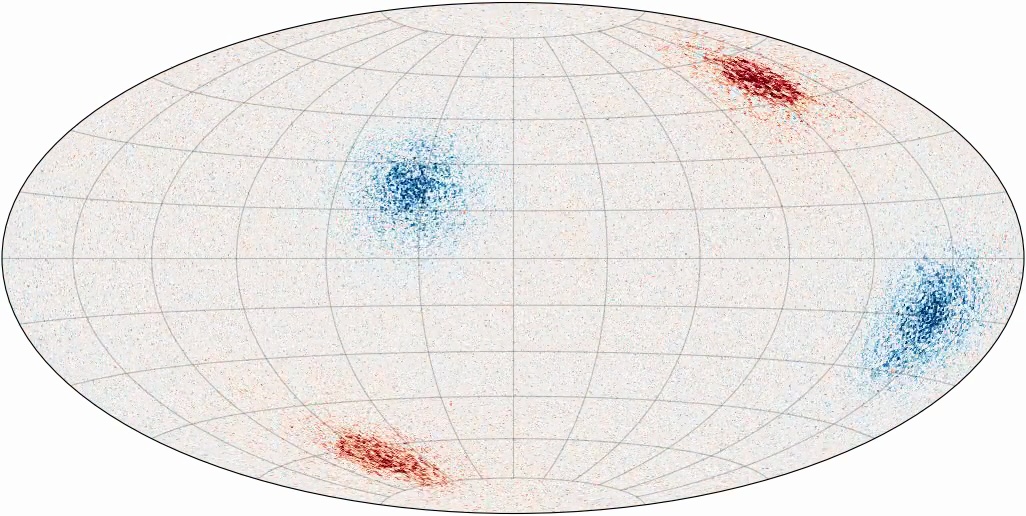}}\, 
    \caption{Snapshots of a typical time evolution of vorticity in Euler's equations on the sphere. A smooth initial vortex configuration undergoes intermediate chaotic stretch-and-fold motion, where vorticity region of equal sign (red or blue) tend to mix into larger vortex condensates.
    At some point, the mixing stops, with a few remaining and interacting coherent vortex structures.
    These numerical results are consistent with the conjectured long-term behaviour~\cite{Sverak2012,Shnirelman2013}.
    For details of the numerical simulations, see the study by Modin and Viviani~\cite{MoVi2026}.}
    \label{fig:01}
\end{figure}

In the context of Šverák's and Shnirelman's ideas about the long-term behaviour, together with Arnold's geometrical description of hydrodynamics, Zeitlin's model~\cite{Ze1991,Ze2004} provides a complementary tool to study the long-term behaviour of 2-D ideal incompressible fluids.
Based on quantisation theory by Hoppe~\cite{Hoppe1982}, it is the only known discretisation that
captures the geometrical Lie--Poisson structure of the 2-D Euler flows \cite{Hoppe1989}, which, for example, leads to discrete analogues of the conservation of the Casimirs.
More precisely, for $n\in\mathbb{N}$, $n\geq2$, Zeitlin's model on the sphere is the Lie--Poisson system on $\mathfrak{su}(n)^* \simeq \mathfrak{su}(n)$ given by
\begin{equation}
    \dot{W}+\frac{1}{\hbar}[P,W]=0,\quad\Delta_nP=W.
    \label{eq:Zeitlin} 
\end{equation}
Here, $\hbar:=2/\sqrt{n^2-1}$, the bracket $[\cdot,\cdot]$ denotes the matrix commutator, and $\Delta_n\colon \mathfrak{su}(n)\to \mathfrak{su}(n)$ is the Hoppe--Yau Laplacian~\cite{Hoppe1998} relating the vorticity matrix $W$ to the stream matrix $P$.
The Hamiltonian is given by
\begin{equation*}
    H(W)=\frac{1}{2}\langle P,W\rangle,
\end{equation*}
and is a conserved quantity, with the angular brackets $\langle\cdot,\cdot\rangle$ denoting the scaled Frobenius inner product
\begin{equation}
    \langle X,Y\rangle=\frac{1}{n}\operatorname{tr}(X^\dagger Y)\quad\forall X,Y\in\mathfrak{u}(n).
    \label{eq:Frobenius_inner_prod}
\end{equation}
The Zeitlin model retains a fundamental property of 2-D Euler flows, namely, it conserves the Casimirs
\begin{equation*}
    C_k^n(W)=\frac{1}{n}\operatorname{tr}((-\mathrm{i}W)^k),\quad k\in\mathbb{N}.
\end{equation*}
In particular, taking $k=2$ yields conservation of the (squared) Frobenius norm $\|\cdot\|^2=\langle\cdot,\cdot\rangle$.
Introducing the matrices $X_1,X_2,X_3\in\mathfrak{u}(n)$ as an irreducible representation of $\mathrm{SO}(3)$ \cite{Hoppe1998}, we can show that the angular momenta
\begin{equation}
    \boldsymbol{L}(W)=\{\langle W,X_\alpha\rangle\}_{\alpha=1}^3
    \label{eq:angmom}
\end{equation}
are additional conserved quantities due to the $\mathrm{SO}(3)$ symmetry, see \cite[Section 2.2]{delgadino2025quantitativestabilityrossbyhaurwitzwaves}. 
It is worth mentioning that the matrices $\{X_\alpha\}_{\alpha=1}^3$ span the first (non-trivial) eigenspace of $\Delta_n$.
Thus, $\mathbb{P}_1W$ is frozen by the evolution, with $\mathbb{P}_\ell$ denoting the projection onto the $\ell$-th eigenspace.
This property is the discrete analogue of the conservation of the angular momentum for the Euler equations on $\mathbb{S}^2$.
All the developments in this paper concerns Zeitlin's model on the sphere $\mathbb{S}$, namely the equations~\eqref{eq:Zeitlin}.
For the flat torus $\mathbb{T}^2$, which was the original model of Zeitlin~\cite{Ze1991}, a convergence analysis was carried out by Gallagher~\cite{Gallagher2002}. 
Finally, let us remark that the flow defined by \eqref{eq:Zeitlin} belongs to the broader class of isospectral flows, where the eigenvalues of $W$ are preserved by the evolution (see, for instance, the survey by Watkins~\cite{Wa1984}).
From the point-of-view of Lie--Poisson geometry, the isospectral property is the manifestation of preservation of coadjoint orbits.
 
The present paper deals with steady states of \eqref{eq:Zeitlin}, namely matrices $W_0,P_0\in\mathfrak{su}(n)$ fulfilling
\begin{equation}
    [P_0,W_0]=0,\quad W_0=\Delta_nP_0.
    \label{eq:Zeitlin_steady}
\end{equation}
A particular case for which \eqref{eq:Zeitlin_steady} is verified is when
\begin{equation}
    W_0=\Delta_nP_0=\mathrm{i} f(-\mathrm{i} P_0),
    \label{eq:Zeitlin_steady_f}
\end{equation}
for some real analytic function $f:\mathbb{R}\to\mathbb{R}$.

Constantin and Germain~\cite{Costantin2022} analysed the Euler equations \eqref{eq:euler} on the unit sphere $\mathbb{S}^2$.
They considered the continuous analogues of \eqref{eq:Zeitlin_steady_f}, i.e., steady states of the form $\omega_0=f(\psi_0)$ for $f\in\mathcal{C}^1(\mathbb{R})$, and they showed Lyapunov stability assuming that $f'>-6$ everywhere.
Moreover, they proved that non-trivial steady states of such kind exist only for $-6<f'<-2$, which are all zonal.

The main goal of this paper is to establish analogue results for Zeitlin's model \eqref{eq:Zeitlin}.
The motivation is twofold.
First, as an approximation, it is essential to understand which aspects of the 2-D Euler flow carry over to the Zeitlin model.
Indeed, the richer the set of analogies, the more useful the approximation.
In addition to the model's premise, namely preservation of the Lie--Poisson geometry, previously established analogies include Lie--Poisson preserving time integration schemes~\cite{MoVi2020c}, qualitatively correct spectral scaling laws~\cite{CiViLuMoGe2022}, convergence of sectional curvature and Jacobi fields~\cite{MoPe2024}, and convergence of solutions~\cite{MoVi2026}.
Here, we thus extend this list further by investigating Lyapunov stability.

But there is also a second motivation, which goes beyond Zeitlin's model as a numerical discretisation.
Namely, Zeitlin's model provides a link between the analysis of the 2-D Euler equations and matrix theory (Lie theory, random matrices, representation theory, et cetera).
Since matrix theory is such a well-developed part of mathematics, it might provide new tools for the 2-D Euler equations, complementary to the traditional PDE analysis tools.
An example is the \emph{canonical decomposition}~\cite{MoVi2022} of the vorticity field, which seems to naturally capture the large and small scales in the long-term statistical behaviour.
This decomposition came from the interpretation of the vorticity field as a complex matrix.
Another example is vortex mixing, where the matrix point-of-view leads to new ideas (cf.~\cite{MoVi2026}).
From these perspectives, we remark that the tools we use in this paper to prove Arnold stability are not based on the infinite-dimensional PDE analysis used by Constantin and Germain.
Instead, they are based on matrix theory and are fully formulated within the realm of Zeitlin's model itself, without considering its connection to the infinite-dimensional 2-D Euler flow.
Our starting point is thus the abstract (Lie theoretic) formulation of Arnold's geometric method for stability, which we review in Section~\ref{sec:Arnold_method}.



Consider a steady state $W_0,P_0\in\mathfrak{su}(n)$ satisfying equation \eqref{eq:Zeitlin_steady}.
Denote by $\{\mathrm{i}p_j\}_{j=1}^n$ the eigenvalues of $P_0$ and by $\{\mathrm{i}w_j\}_{j=1}^n$ the eigenvalues of $W_0$ (ordered with respect to a common eigenbasis), and define the quantity
\begin{equation*}
    L:=\min_{\substack{m\in\{0,...,n-1\}\\ j\in\{1,...,n-m\}}}\left\{\frac{w_{j+m}-w_j}{p_{j+m}-p_j}\right\}.
\end{equation*}
Regarding stability of the steady state, our main result is collected in the following theorem. 
\begin{theorem}[Stability] \label{thm:stability_informal}
    If $L>-6$, then $W_0,P_0$ is Lyapunov stable in the Frobenius norm.
    In particular, if $W_0,P_0$ is a steady state of type \eqref{eq:Zeitlin_steady_f} with $f'>-6$ everywhere, then it is Lyapunov stable in the Frobenius norm.
\end{theorem}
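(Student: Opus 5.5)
The plan is to use the energy--Casimir version of Arnold's method from Section~\ref{sec:Arnold_method}. I will build a conserved function $\mathcal{H}=H+C$, where $C(W)=\frac1n\operatorname{tr}G(-\mathrm{i}W)$ is a Casimir with $G$ a polynomial to be chosen. The function must satisfy two conditions: $W_0$ is a critical point of $\mathcal{H}$ on $\mathfrak{su}(n)$, and the Hessian $D^2\mathcal{H}(W_0)$ is definite on a subspace complementary to the frozen directions. Since $\mathfrak{su}(n)$ is finite dimensional, definiteness gives $|\mathcal{H}(W)-\mathcal{H}(W_0)|\ge c\|W-W_0\|^2$ near $W_0$. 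Conservation of $\mathcal{H}$ and continuity of $\mathcal{H}$ at $W_0$ then give Lyapunov stability in the Frobenius norm in the usual way.

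First I diagonalise $W_0$ and $P_0$ in a common unitary eigenbasis, with $W_0=\mathrm{i}\,\mathrm{diag}(w_j)$ and $P_0=\mathrm{i}\,\mathrm{diag}(p_j)$; the functional is unitarily invariant, so I may work in this basis. The derivative of $H$ at $W_0$ is $P_0$, and the derivative of $C$ is (up to a constant factor) $\mathrm{i}G'(-\mathrm{i}W_0)$. Criticality therefore reduces to the interpolation conditions $G'(w_j)=-p_j$, modulo a multiple of the identity, which is invisible in $\mathfrak{su}(n)$.

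Next I compute the Hessian with the Daleckii--Krein formula. The second variation of $\operatorname{tr}G(-\mathrm{i}W)$ in a direction $\delta W$ is $\sum_{j,k}|\delta W_{jk}|^2\,G'[w_j,w_k]$, where $G'[w_j,w_k]$ is the divided difference of $G'$. For $j\neq k$ with $w_j\ne w_k$, the interpolation conditions force this coefficient to equal $-(p_j-p_k)/(w_j-w_k)=-1/r_{jk}$, where $r_{jk}$ is the ratio whose minimum over pairs is $L$. The diagonal coefficients $G''(w_j)$ remain free, since they are extra Hermite interpolation data. The Hessian of $H$ is $\langle\Delta_n^{-1}\delta W,\delta W\rangle$. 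On the $\ell$-th eigenspace $\Delta_n^{-1}$ acts by $-1/(\ell(\ell+1))$, which is $-1/2$ on $\ell=1$ and at least $-1/6$ on $\ell\ge2$. I then argue by cases on the signs of the $r_{jk}$. If every $r_{jk}>0$, both terms are negative and I choose $G''(w_j)<0$, so the Hessian is negative definite. If some $r_{jk}\in(-6,0)$, I aim for positive definiteness instead. That requires choosing all $r_{jk}\in(-6,0)$ and $G''(w_j)$ large, and restricting to perturbations with $\mathbb{P}_1\delta W=0$. Then $-1/r_{jk}>1/6\ge|\langle\Delta_n^{-1}\delta W,\delta W\rangle|/\|\delta W\|^2$.

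The restriction $\mathbb{P}_1\delta W=0$ is legitimate because $\mathbb{P}_1W$ is conserved. Perturbations with nonzero $\mathbb{P}_1$-component are handled by the conserved angular momenta~\eqref{eq:angmom}: I add $\lambda\|\mathbb{P}_1(W-W_0)\|^2$ to $\mathcal{H}$, which is again conserved. This yields definiteness on all of $\mathfrak{su}(n)$.

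The main obstacle is the mixed-sign situation, together with degenerate spectra. First, pairs with $r_{jk}>0$ and pairs with $r_{jk}<0$ may coexist, and then neither sign of definiteness follows from the pairwise bound alone. Second, if $w_j=w_k$ but $p_j\neq p_k$, no function $G'$ of $w$ can interpolate. For the degenerate case I would use that these off-diagonal directions lie in the stabiliser of $W_0$, so they are tangent to no orbit and are not seen by the dynamics. I would therefore control them by a different route: augment $C$ by a Casimir of a matrix-valued function constructed jointly from $P_0$, or else run Arnold's argument only on the coadjoint orbit through the second-variation form $\langle\Delta_n^{-1}\xi,\xi\rangle-\sum_{j\ne k}|\xi_{jk}|^2/r_{jk}$ with $\xi=[A,W_0]$, and then extend to nearby orbits by isospectral continuity. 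For the mixed-sign case I would first try the observation that $L>-6$ combined with positivity of $\|\cdot\|^2$ lets me shift $G'$ by a multiple of the identity function, $G(x)\mapsto G(x)+\mu x^2$. This adds $\mu\|\delta W\|^2$ and turns all the coefficients into ones of a single sign. The remaining work is to check that this shift is compatible with criticality, using the fact that $\|W\|^2$ is itself a Casimir. The second statement of the theorem then follows from the first: for $W_0=\mathrm{i}f(-\mathrm{i}P_0)$, the mean value theorem gives $r_{jk}=f'(\xi_{jk})>-6$.
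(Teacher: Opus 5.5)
Your energy--Casimir argument is correct when all ratios have one sign, but both repairs you propose for the remaining cases fail. Take the mixed-sign case first. Replacing $G$ by $G+\mu x^2$ adds a multiple of $\|W\|^2$ to $\mathcal{H}$. Being a Casimir makes that term conserved, but not critical at $W_0$: its differential there is a nonzero multiple of $W_0$. Re-imposing $G'(w_j)=-p_j$ then sends the divided differences straight back to $-1/r_{jk}$. This is structural, not a matter of choosing $G$ cleverly. Suppose $d(H+C)(W_0)=0$ for some Casimir $C$, and differentiate $\epsilon\mapsto(H+C)(\operatorname{Ad}^*_{\operatorname{exp}(\epsilon X)}W_0)$ twice. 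This shows that the restriction of $D^2(H+C)(W_0)$ to $T_{W_0}\mathcal{O}_{W_0}=\{[X,W_0]\}$ is the form $Q$ of Proposition~\ref{prop:Arnold}, whatever $C$ is. Your penalty $\lambda\|\mathbb{P}_1(W-W_0)\|^2$ does not change it on $\ker\mathbb{P}_1$. With ratios of both signs, this restriction is in general indefinite on $T_{W_0}\mathcal{O}_{W_0}\cap\ker\mathbb{P}_1$: variations supported on pairs with $r_{jk}>0$ and variations supported on pairs with $r_{jk}\in(-6,0)$ give opposite signs. So no choice of $G$, $\mu$, $\lambda$ yields a definite Hessian. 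The paper's argument does not resolve this case either. Its estimate $c\|[X,W_0]\|^2\le Q(X)\le(\tfrac16+C)\|[X,W_0]\|^2$ gives definiteness only when $c>0$ or $C<-\tfrac16$, i.e.\ in exactly your two single-sign regimes.

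Your handling of repeated eigenvalues also breaks down. Suppose $w_j=w_k$ but $p_j\neq p_k$, a ratio $0$ that $L>-6$ allows. Then $P_0$ is not a polynomial in $W_0$ modulo the identity, whereas the differential at $W_0$ of every Casimir is. So no $H+C$ is critical, and the energy--Casimir method cannot start. There is also no ``Casimir built from $P_0$'', since Casimirs of $\mathfrak{su}(n)$ are spectral functions of $W$ alone. Nor are these directions invisible to the dynamics. They are normal to $\mathcal{O}_{W_0}$, and a perturbation along them splits the eigenvalue and lands on a different, larger orbit. Controlling exactly such perturbations is what Lyapunov stability demands, and isospectral continuity gives nothing once the spectrum moves. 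Your other fallback, Arnold's form on $\xi=[A,W_0]$, is precisely the paper's proof. It needs only $[P_0,W_0]=0$, not a critical Casimir-augmented functional. Lemma~\ref{lemma:klas} writes $\langle[X,W_0],[X,P_0]\rangle$ in the common eigenbasis, so entries with $w_j=w_k$ drop out automatically. The ratio bounds, together with the bound $\tfrac16$ for $-\Delta_n^{-1}$ on degrees $\ge 2$, give definiteness on the orbit. The passage to Lyapunov stability is taken from the finite-dimensional results the paper cites. Finally, if every ratio $r_{jk}$ with $(w_j,p_j)\neq(w_k,p_k)$ lies in $(-6,0)$, then automatically $w_j=w_k\iff p_j=p_k$. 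So the degeneracy never occurs where your argument applies. In that regime, your functional with the conserved penalty on $\mathbb{P}_1(W-W_0)$ is a valid alternative to the paper's route. It also gives slightly more: a Hessian definite on all of $\mathfrak{su}(n)$, not only along the orbit.
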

The statement of this stability theorem is collected in Theorem \ref{thm:stability} in Section \ref{sec:application_to_Zeitlin}.
It is worth noticing that the condition $L>-6$ is associated with the definiteness of the Hessian of the Hamiltonian restricted to the orbit (cf. Section \ref{sec:Arnold_method}).
We further show that such Hessian is never positive definite (unless in the case of the trivial identically null steady state), and that negative definiteness forces the matrix to be diagonal.
These rigidity results are collected in the following theorem, here stated in an informal way.
\begin{theorem}[Rigidity] \label{thm:rigidity_informal}
    If $L>-6$, then the vorticity matrix $W_0$ is diagonal up to rotations.
    In particular, if $L>-2$, then $W_0=0$.
\end{theorem}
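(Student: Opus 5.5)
The plan is to obtain rigidity as a by-product of the definiteness that drives Theorem~\ref{thm:stability_informal}, by evaluating the second variation on tangent directions generated by the $\mathrm{SO}(3)$ symmetry. I expect these directions to be neutral, i.e.\ to make the quadratic form vanish; definiteness will then force them to be zero, and that is the rigidity.

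First I set up the quadratic form. Work in a common unitary eigenbasis of $W_0$ and $P_0$, in which $W_0=\mathrm{i}\,\mathrm{diag}(w_j)$ and $P_0=\mathrm{i}\,\mathrm{diag}(p_j)$. A tangent vector to the coadjoint orbit through $W_0$ has the form $\xi=[A,W_0]$, with entries $\xi_{jk}=\mathrm{i}A_{jk}(w_k-w_j)$. From the Arnold method of Section~\ref{sec:Arnold_method}, I expect the Hessian of $H$ restricted to the orbit to read, up to normalisation,
\begin{equation*}
Q(\xi)=\langle \xi,\Delta_n^{-1}\xi\rangle-\frac1n\sum_{j,k}|\xi_{jk}|^2\,\frac{p_k-p_j}{w_k-w_j}.
\end{equation*}
Pairs with $w_k=w_j$ do not contribute, since $\xi_{jk}=0$ for them. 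The eigenvalues of $\Delta_n^{-1}$ are $-1/(\ell(\ell+1))$ for $\ell=1,\dots,n-1$. Comparing the first term with the second then gives two facts, which are exactly the computation behind Theorem~\ref{thm:stability_informal}:
\begin{itemize}
\item if $L>-6$, $Q$ is negative definite on tangent vectors with $\mathbb{P}_1\xi=0$, because $\ell\ge2$ gives $\langle\xi,\Delta_n^{-1}\xi\rangle\le-\tfrac16\|\xi\|^2$;
\item if $L>-2$, $Q$ is negative definite on the whole tangent space, because $\ell\ge1$ gives $\langle\xi,\Delta_n^{-1}\xi\rangle\le-\tfrac12\|\xi\|^2$.
\end{itemize}
Getting the signs right in the cases where some ratio $(w_k-w_j)/(p_k-p_j)$ is negative is the step I expect to be the main technical obstacle. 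It is inherited from the stability proof, so I would import it from there rather than redo it.

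Next I show the symmetry directions are neutral. Take $\xi_\alpha=[X_\alpha,W_0]$. Since $\Delta_n$ is $\mathrm{SO}(3)$-equivariant, $\mathrm{ad}_{X_\alpha}$ commutes with $\Delta_n$ and preserves each eigenspace. Hence $\Delta_n^{-1}\xi_\alpha=[X_\alpha,P_0]$; the trace parts cause no trouble because everything lies in $\mathfrak{su}(n)$. In the eigenbasis,
\begin{equation*}
\langle\xi_\alpha,\Delta_n^{-1}\xi_\alpha\rangle=\frac1n\sum_{j,k}|(X_\alpha)_{jk}|^2(w_k-w_j)(p_k-p_j).
\end{equation*}
Substituting $\xi_{jk}=\mathrm{i}(X_\alpha)_{jk}(w_k-w_j)$ into the second term of $Q$ gives exactly the same sum. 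Therefore $Q(\xi_\alpha)=0$ for every $\alpha$, as expected for directions along the symmetry group orbit, and the same holds for any combination $\xi=[X,W_0]$ with $X\in\mathrm{span}\{X_1,X_2,X_3\}$.

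Now the case $L>-6$. The angular momentum $\mathbb{P}_1W_0$ is a vector in $\mathrm{span}\{X_\alpha\}$. Using the $\mathrm{SO}(3)$ action, which is the ``up to rotations'' in the statement, rotate it so that $\mathbb{P}_1W_0=cX_3$. Because $\mathrm{ad}_{X_3}$ preserves eigenspaces,
\begin{equation*}
\mathbb{P}_1[X_3,W_0]=[X_3,\mathbb{P}_1W_0]=c[X_3,X_3]=0.
\end{equation*}
So $\xi_3=[X_3,W_0]$ lies in the subspace where $Q$ is definite, and $Q(\xi_3)=0$ forces $[X_3,W_0]=0$. The matrix $X_3$ in the irreducible representation has simple spectrum, so $W_0$ is diagonal in the eigenbasis of $X_3$; this is the zonal, diagonal form.

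Finally the case $L>-2$. Here $Q$ is definite on the whole tangent space, so $Q(\xi_\alpha)=0$ gives $[X_\alpha,W_0]=0$ for $\alpha=1,2,3$. By irreducibility and Schur's lemma, $W_0$ is then a scalar multiple of the identity. Since $W_0$ is traceless, $W_0=0$.
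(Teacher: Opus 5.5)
There is a genuine gap, and it sits in the step everything else depends on: the claimed definiteness of the Hessian $Q$.

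\textbf{The definiteness claim is false.} First, the inequality you quote is reversed. On the eigenspaces with $\ell\ge 2$ the eigenvalues of $\Delta_n^{-1}$ are $-1/(\ell(\ell+1))\in[-\tfrac16,0)$. So for $\mathbb{P}_1\xi=0$ you only get $-\tfrac16\|\xi\|^2\le\langle\xi,\Delta_n^{-1}\xi\rangle<0$; the bound $\le -6\|\cdot\|^2$ holds for $\Delta_n$, not for $\Delta_n^{-1}$. Likewise, in the $L>-2$ case you only get $\langle\xi,\Delta_n^{-1}\xi\rangle\ge-\tfrac12\|\xi\|^2$.

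Second, and more seriously, write $r_{jk}=(w_k-w_j)/(p_k-p_j)$. Your form is $Q(\xi)=\langle\xi,\Delta_n^{-1}\xi\rangle-\frac1n\sum|\xi_{jk}|^2r_{jk}^{-1}$. The correct bounds give only two cases:
\begin{itemize}
\item $Q$ is negative definite on $\{\mathbb{P}_1\xi=0\}$ if all $r_{jk}>0$;
\item $Q$ is positive definite there if all $r_{jk}\in(-6,0)$.
\end{itemize}
The hypothesis $L>-6$ allows both signs at once. This already happens for suitable zonal states, e.g.\ $P_0$ a combination of $T^n_{1,0}$ and $T^n_{2,0}$. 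The weights $-1/r_{jk}$ then take both signs, $Q$ is in general indefinite on $\{\mathbb{P}_1\xi=0\}$, and $Q(\xi_3)=0$ does not force $\xi_3=0$.

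Importing the sign handling from the stability argument does not help. That argument yields definiteness only when $C<-1/6$ or $c>0$, i.e.\ exactly in the single-sign cases. The same objection applies verbatim to your $L>-2$ step.

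\textbf{The missing idea} is to test against the stream-function variation rather than the vorticity variation. Your neutrality computation already contains the right identity. Set $\eta=[X_3,P_0]$, so that $\Delta_n\eta=[X_3,W_0]=\xi_3$. In the common eigenbasis, with $\tilde X=\Lambda^\dagger X_3\Lambda$,
\[
\langle\Delta_n\eta,\eta\rangle=\frac1n\sum_{j,k}|\tilde X_{jk}|^2(p_k-p_j)(w_k-w_j)>-\frac6n\sum_{j,k}|\tilde X_{jk}|^2(p_k-p_j)^2=-6\|\eta\|^2
\]
whenever $\eta\neq0$. This holds because, termwise, $(p_k-p_j)(w_k-w_j)=r_{jk}(p_k-p_j)^2>-6(p_k-p_j)^2$, \emph{whatever the sign of} $r_{jk}$. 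On the other hand, $\mathbb{P}_1\eta=\Delta_n^{-1}\mathbb{P}_1\xi_3=0$, so $\langle\Delta_n\eta,\eta\rangle\le-6\|\eta\|^2$. Hence $\eta=0$ and $[X_3,W_0]=\Delta_n\eta=0$.

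This is the paper's proof of Proposition~\ref{prop:rigidity_-6}. The sign-definite object under $L>-6$ is the discrete linearised operator ``$\Delta_n-f'$'' acting on $\eta$, not Arnold's Hessian, which involves $1/f'$.

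For $L>-2$, run the same argument for each $\alpha$, using $\langle\Delta_n\eta_\alpha,\eta_\alpha\rangle\le-2\|\eta_\alpha\|^2$ on all of $\mathfrak{su}(n)$. This gives $[X_\alpha,P_0]=0$ for every $\alpha$. (The paper instead sums over $\alpha$ and compares with $\|\Delta_nP_0\|^2$.) Your Schur-lemma conclusion then applies.

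The remaining steps of your argument are fine: rotating so that $\mathbb{P}_1W_0=cX_3$, $\mathbb{P}_1[X_3,W_0]=0$, neutrality of the symmetry directions, and the simple spectrum of $X_3$.
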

The precise results are found in Propositions \ref{prop:rigidity_-6} and \ref{prop:rigidity_-2} in Section \ref{sec:rigidity}.
For steady states of the type in equation~\eqref{eq:Zeitlin_steady_f}, one can here also replace the condition on $L$ with a condition on $f'$.

The paper is organised as follows.
In Section \ref{sec:Arnold_method}, we outline the Arnold method to derive a general condition for Lie--Poisson dynamics.
This method is then applied to the Zeitlin model in Section \ref{sec:application_to_Zeitlin}, proving Theorem \ref{thm:stability_informal}.
Finally, Section \ref{sec:rigidity} is devoted to the proof of the rigidity results stated in Theorem \ref{thm:rigidity_informal}.

\section{Background on nonlinear stability of Lie--Poisson systems} \label{sec:Arnold_method}

In this section, we recall the abstract theory for Lyapunov stability of Lie--Poisson systems; for more details we refer to the treatise by Arnold and Khesin~\cite{ArKh1998}.
Consider a Lie group $G$ with associated Lie algebra $\mathfrak{g}$ and Lie bracket $[\cdot,\cdot]_\mathfrak{g}$.
The Lie--Poisson dynamics on the dual $\mathfrak{g}^*$ for the Hamiltonian $H\in\mathcal{C}^\infty(\mathfrak{g}^*)$ is then given by
\begin{equation}\label{eq:LP}
      \dot\omega = \operatorname{ad}^*_{dH(\omega)}\omega,
\end{equation}
where $\operatorname{ad}^*_\psi\colon \mathfrak{g}^*\to\mathfrak{g}^*$ is determined by
\begin{equation*}
      \langle \operatorname{ad}^*_\psi\omega, \xi\rangle_{\mathfrak{g}^*,\mathfrak{g}} = \langle \omega, \underbrace{\operatorname{ad}_\psi\xi}_{[\psi,\xi]_\mathfrak{g}}\rangle_{\mathfrak{g}^*,\mathfrak{g}}.
\end{equation*}
The equation \eqref{eq:LP} evolves on the coadjoint orbit $\mathcal{O}_\omega = \{\operatorname{Ad}_{g}^*\omega \mid g \in G \}$, where $\operatorname{Ad}_g^*\colon \mathfrak{g}^*\to \mathfrak{g}^*$ is defined by
\begin{equation*}
      \langle \operatorname{Ad}_g^*\omega, \xi \rangle_{\mathfrak{g}^*,\mathfrak{g}} = \langle \omega, \operatorname{Ad}_g \xi \rangle_{\mathfrak{g}^*,\mathfrak{g}}.
\end{equation*}
Here, $\operatorname{Ad}_g$ denotes the action of $G$ on its Lie algebra $\mathfrak{g}$.
Let $\omega_0\in \mathfrak{g}^*$ be a stationary point for \eqref{eq:LP}, i.e., $\operatorname{ad}^*_{dH(\omega_0)}\omega_0 = 0$. 
Arnold's method for stability~\cite{Ar1966} consists in checking that $H|_{\mathcal{O}_{\omega_0}}$, the Hamiltonian restricted to $\mathcal{O}_{\omega_0}$, is strictly convex or concave in a neighbourhood of $\omega_0$.
If so, the Hamiltonian is a Lyapunov function for the system restricted to the coadjoint orbit.
Thus, we need to compute the Hessian of $H\vert_{\mathcal{O}_{\omega_0}}$.
The strategy is to use that the coadjoint orbit $\mathcal{O}_{\omega_0}$ is (locally) parametrised by $\operatorname{Ad}^*_{\operatorname{exp}\xi}\omega_0$ for $\xi\in\mathfrak{g}$.
More precisely, following \cite{Ar1966}, we employ such parametrisation to show the following expressions for $H$ and its Hessian $Q$.
\begin{proposition} \label{prop:Arnold}
    Given a stationary point $\omega_0\in \mathfrak{g}^*$ for \eqref{eq:LP}, we have
    \begin{equation}
        H(\operatorname{Ad}^*_{\operatorname{exp}(\epsilon \xi)}\omega_0)-H(\omega_0)=\frac{\epsilon^2}{2}Q(\xi)+\mathcal{O}(\epsilon^3),
        \label{eq:2nd_order_Taylor_exp_for_H}
    \end{equation}
    where $Q$ is the following quadratic form on $\mathfrak{g}$:
    \begin{equation*}
        Q(\xi) = \langle D^2H(\omega_0)\operatorname{ad}^*_\xi\omega_0,\operatorname{ad}^*_\xi\omega_0\rangle_{\mathfrak{g}^*,\mathfrak{g}} + \langle \mathrm{ad}_\xi\big(dH(\omega_0)\big), \operatorname{ad}_\xi^*\omega_0 \rangle_{\mathfrak{g}^*,\mathfrak{g}}.
    \end{equation*}
\end{proposition}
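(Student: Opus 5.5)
Define the curve $\gamma(\epsilon):=\operatorname{Ad}^*_{\exp(\epsilon\xi)}\omega_0$ in $\mathfrak{g}^*$ and Taylor expand the scalar function $h(\epsilon):=H(\gamma(\epsilon))$ to second order at $\epsilon=0$. Since $H$ is smooth, $h$ is smooth, so
\[
h(\epsilon)-h(0)=\epsilon h'(0)+\tfrac{\epsilon^2}{2}h''(0)+\mathcal{O}(\epsilon^3).
\]
It therefore suffices to show that $h'(0)=0$ and $h''(0)=Q(\xi)$.

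First compute the derivatives of $\gamma$. Differentiating $\langle\gamma(\epsilon),\eta\rangle_{\mathfrak{g}^*,\mathfrak{g}}=\langle\omega_0,\operatorname{Ad}_{\exp(\epsilon\xi)}\eta\rangle_{\mathfrak{g}^*,\mathfrak{g}}$ and using $\operatorname{Ad}_{\exp(\epsilon\xi)}=\exp(\epsilon\operatorname{ad}_\xi)$ gives
\[
\gamma'(0)=\operatorname{ad}^*_\xi\omega_0,\qquad \gamma''(0)=\operatorname{ad}^*_\xi\operatorname{ad}^*_\xi\omega_0 .
\]
This requires some care with the order of the dual maps, but since the expression involves the same $\xi$ twice there is no ambiguity. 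The chain rule then gives
\[
h'(0)=\langle \operatorname{ad}^*_\xi\omega_0, dH(\omega_0)\rangle=\langle\omega_0,[\xi,dH(\omega_0)]\rangle=-\langle\operatorname{ad}^*_{dH(\omega_0)}\omega_0,\xi\rangle=0,
\]
by antisymmetry of the bracket and stationarity, $\operatorname{ad}^*_{dH(\omega_0)}\omega_0=0$.

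For the second derivative,
\[
h''(0)=\langle D^2H(\omega_0)\gamma'(0),\gamma'(0)\rangle+\langle\gamma''(0),dH(\omega_0)\rangle .
\]
The first term is exactly the first term of $Q(\xi)$. The second term is $\langle\operatorname{ad}^*_\xi\operatorname{ad}^*_\xi\omega_0,dH(\omega_0)\rangle=\langle\operatorname{ad}^*_\xi\omega_0,\operatorname{ad}_\xi dH(\omega_0)\rangle$, by the definition of $\operatorname{ad}^*$ applied once. This is the second term of $Q(\xi)$, so the two expressions agree.

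The only delicate point is the sign and ordering convention relating $\operatorname{Ad}^*$ and $\operatorname{ad}^*$: with the paper's definition, the derivative of $\operatorname{Ad}^*_{\exp(\epsilon\xi)}$ at $\epsilon=0$ is $\operatorname{ad}^*_\xi$ with no sign flip. A sign convention differing from this would change only the sign of $\gamma'(0)$, which enters $Q$ quadratically. Likewise the vanishing of $h'(0)$ holds with either sign, so the conclusion is robust.
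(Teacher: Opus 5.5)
Your proposal is correct and follows essentially the same route as the paper. Both arguments expand $\epsilon\mapsto H(\operatorname{Ad}^*_{\exp(\epsilon\xi)}\omega_0)$ to second order using $\operatorname{Ad}^*_{\exp(\epsilon\xi)}=\exp(\epsilon\operatorname{ad}^*_\xi)$, kill the linear term via stationarity, and identify the quadratic term with $Q(\xi)$; you also write out the duality step $\langle(\operatorname{ad}^*_\xi)^2\omega_0,dH(\omega_0)\rangle=\langle\operatorname{ad}^*_\xi\omega_0,\operatorname{ad}_\xi dH(\omega_0)\rangle$, which the paper leaves implicit.
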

Before the proof, observe that $Q$ depends on $\xi\in\mathfrak{g}$ only through the associated variation $\operatorname{ad}^*_{\xi}\omega_0$.
\begin{remark}
      The quadratic form $Q$ is well-defined on $T_{\omega_0}^*\mathcal{O}_{\omega_0} \simeq \mathfrak{g}/\mathrm{stab}(\omega_0)$.
      Indeed, if $\operatorname{ad}^*_{\xi_1}(\omega_0) = \operatorname{ad}^*_{\xi_2}(\omega_0)$ then
      \begin{align*}
            Q(\xi_1) =& \langle \operatorname{ad}_{\xi_1}\psi_0, \operatorname{ad}^*_{\xi_1}\omega_0\rangle_{\mathfrak{g}^*,\mathfrak{g}} = \langle [\xi_2,[\xi_1,\psi_0]_\mathfrak{g}]_\mathfrak{g},\omega_0 \rangle_{\mathfrak{g}^*,\mathfrak{g}}\\
            =& \langle [\xi_1,[\xi_2,\psi_0]_\mathfrak{g}]_\mathfrak{g} + [[\xi_2,\xi_1]_\mathfrak{g},\psi_0]_\mathfrak{g},\omega_0 \rangle_{\mathfrak{g}^*,\mathfrak{g}} = \langle \operatorname{ad}_{\xi_2}\psi_0, \operatorname{ad}^*_{\xi_2}\omega_0 \rangle_{\mathfrak{g}^*,\mathfrak{g}} = Q(\xi_2),
      \end{align*}
      where we used that $\operatorname{ad}^*_{\psi_0}\omega_0 = 0$.
      Thus, we might write $Q(\operatorname{ad}^*_{\xi}\omega_0)$ instead of $Q(\xi)$.
\end{remark}
It is also worth mentioning that, in the special case of a geodesics motion, Proposition \ref{prop:Arnold} can be re-stated as follows.
\begin{corollary}[Euler--Arnold equations] \label{cor:Arnold}
    In the setting of Proposition \ref{prop:Arnold}, let
    \begin{equation}
        H(\omega) =\frac{1}{2} \langle A^{-1}\omega,\omega\rangle_{\mathfrak{g}^*,\mathfrak{g}}
        \label{eq:geodesic_motion}
    \end{equation}
    for some positive inertia operator $A\colon \mathfrak{g}\to \mathfrak{g}^*$.
    Then, \eqref{eq:2nd_order_Taylor_exp_for_H} holds with
    \begin{equation}
        Q(\xi) = \langle A^{-1} \operatorname{ad}^*_\xi\omega_0, \operatorname{ad}^*_\xi\omega_0\rangle_{\mathfrak{g}^*,\mathfrak{g}} + \langle \operatorname{ad}_\xi A^{-1}\omega_0, \operatorname{ad}_\xi^*\omega_0 \rangle_{\mathfrak{g}^*,\mathfrak{g}}.
        \label{eq:def_of_Q_corollary}
    \end{equation}
\end{corollary}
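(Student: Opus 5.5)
The corollary follows by specialising Proposition~\ref{prop:Arnold} to the quadratic Hamiltonian \eqref{eq:geodesic_motion}. We only need to identify $dH(\omega_0)$ and $D^2H(\omega_0)$ and substitute them into the formula for $Q$. The expansion \eqref{eq:2nd_order_Taylor_exp_for_H} then holds with no extra work, since Proposition~\ref{prop:Arnold} gives it for an arbitrary smooth $H$ at a stationary point. Here $H$ is a quadratic polynomial, hence smooth, and $\omega_0$ is assumed stationary.

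First we compute the derivatives of $H$. We regard $A^{-1}\colon\mathfrak{g}^*\to\mathfrak{g}$, identifying $dH(\omega)\in(\mathfrak{g}^*)^*\simeq\mathfrak{g}$ in the finite-dimensional setting. For a variation $\delta\omega$,
\begin{equation*}
    \frac{d}{ds}\Big|_{s=0}H(\omega+s\,\delta\omega)=\tfrac12\langle A^{-1}\delta\omega,\omega\rangle_{\mathfrak{g}^*,\mathfrak{g}}+\tfrac12\langle A^{-1}\omega,\delta\omega\rangle_{\mathfrak{g}^*,\mathfrak{g}}.
\end{equation*}
Because $A$ is a positive inertia operator, it is symmetric in the sense that $\langle A^{-1}\mu,\nu\rangle_{\mathfrak{g}^*,\mathfrak{g}}=\langle A^{-1}\nu,\mu\rangle_{\mathfrak{g}^*,\mathfrak{g}}$; this is the pairing convention implicit in writing \eqref{eq:geodesic_motion}. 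Using this symmetry, the two terms coincide and we obtain $dH(\omega)=A^{-1}\omega$. Differentiating once more gives $D^2H(\omega)=A^{-1}$ for every $\omega$, and in particular at $\omega_0$.

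Next we substitute into the expression for $Q$ in Proposition~\ref{prop:Arnold}. The first term becomes
\begin{equation*}
    \langle D^2H(\omega_0)\operatorname{ad}^*_\xi\omega_0,\operatorname{ad}^*_\xi\omega_0\rangle_{\mathfrak{g}^*,\mathfrak{g}}=\langle A^{-1}\operatorname{ad}^*_\xi\omega_0,\operatorname{ad}^*_\xi\omega_0\rangle_{\mathfrak{g}^*,\mathfrak{g}}.
\end{equation*}
The second term becomes
\begin{equation*}
    \langle \operatorname{ad}_\xi\big(dH(\omega_0)\big),\operatorname{ad}^*_\xi\omega_0\rangle_{\mathfrak{g}^*,\mathfrak{g}}=\langle \operatorname{ad}_\xi A^{-1}\omega_0,\operatorname{ad}^*_\xi\omega_0\rangle_{\mathfrak{g}^*,\mathfrak{g}}.
\end{equation*}
Adding the two yields exactly \eqref{eq:def_of_Q_corollary}.

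The only step needing care is the symmetry of $A^{-1}$ and the ordering of arguments in the pairing $\langle\cdot,\cdot\rangle_{\mathfrak{g}^*,\mathfrak{g}}$, which the paper writes somewhat loosely. This symmetry is what justifies $dH(\omega)=A^{-1}\omega$ rather than the symmetrised $\tfrac12(A^{-1}+(A^{-1})^*)\omega$. If $A$ were not symmetric, we would replace it by its symmetric part, which leaves $H$ unchanged, so the statement holds with that convention.
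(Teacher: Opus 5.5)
Your proof is correct and follows the paper, which presents the corollary as a direct restatement of Proposition~\ref{prop:Arnold}: you substitute $dH(\omega_0)=A^{-1}\omega_0$ and $D^2H(\omega_0)=A^{-1}$, valid because an inertia operator is symmetric. One small slip in your closing remark: replacing $A$ by its symmetric part does change $H$, whereas replacing $A^{-1}$ by its symmetric part leaves $H$ unchanged.
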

Now that we have an explicit expression for $Q$ available, the Arnold method for stability consists in checking that such $Q$ is positive or negative definite.
This is done in Section \ref{subsec:Euler} for the Euler equations on $\mathbb{S}^2$ and then in Section \ref{sec:application_to_Zeitlin} for the Zeitlin model.
Before that, we give the proof of Proposition \ref{prop:Arnold}.
\begin{proof}[Proof of Proposition \ref{prop:Arnold}]
    Since $\operatorname{Ad}_g$ provides a representation of $G$ on $\mathfrak{g}$, we have the formula
    \begin{equation*}
        \operatorname{Ad}_{\operatorname{exp}\xi}\psi = \operatorname{exp}(\operatorname{ad}_\xi)\psi,    
    \end{equation*}
    which yields the Taylor expansion
    \begin{equation*}
        \operatorname{Ad}_{\operatorname{exp}(\epsilon\xi)}\psi = \sum_{k=0}^\infty \frac{\epsilon^k}{k!} (\operatorname{ad}_\xi)^k\psi.
    \end{equation*}
    Consequently, we also get
    \begin{equation}
        \operatorname{Ad}^*_{\operatorname{exp}(\epsilon\xi)}\omega = \sum_{k=0}^\infty \frac{\epsilon^k}{k!} (\operatorname{ad}^*_\xi)^k\omega.
        \label{eq:Taylor_exp_Ad_star}
    \end{equation}
    Differentiating the function $\epsilon \mapsto H(\operatorname{Ad}^*_{\operatorname{exp}(\epsilon \xi)}\omega_0)$ we get
    \begin{align*}
        \diff{^2}{\epsilon^2} H(\operatorname{Ad}^*_{\operatorname{exp}(\epsilon \xi)}\omega_0) =& \diff{}{\epsilon}\left( 
        H'(\operatorname{Ad}^*_{\operatorname{exp}(\epsilon \xi)}\omega_0)[\operatorname{ad}^*_\xi\omega_0 + \epsilon (\operatorname{ad}^*_\xi)^2\omega_0 + \mathcal{O}(\epsilon^2)]
        \right) \\
        =&H''(\operatorname{Ad}^*_{\operatorname{exp}(\epsilon \xi)}\omega_0)[\operatorname{ad}^*_\xi\omega_0 + \mathcal{O}(\epsilon),\operatorname{ad}^*_\xi\omega_0 + \mathcal{O}(\epsilon)] \\
        &+
        H'(\operatorname{Ad}^*_{\operatorname{exp}(\epsilon \xi)}\omega_0)[(\operatorname{ad}^*_\xi)^2\omega_0 + \mathcal{O}(\epsilon)],
    \end{align*}
    which yields the Taylor expansion
    \begin{align}
    \begin{split}
        H(\operatorname{Ad}^*_{\operatorname{exp}(\epsilon \xi)}\omega_0)=&H(\omega_0) + \epsilon H'(\omega_0)[\operatorname{ad}^*_\xi\omega_0] \\
        &+  \frac{\epsilon^2}{2}\Big( H''(\omega_0)[\operatorname{ad}^*_\xi\omega_0,\operatorname{ad}^*_\xi\omega_0] + H'(\omega_0)[(\operatorname{ad}^*_\xi)^2\omega_0]\Big)+\mathcal{O}(\epsilon^3)\\
        =&H(\omega_0) + \epsilon \langle dH(\omega_0), \operatorname{ad}^*_{\xi}\omega_0 \rangle_{\mathfrak{g}^*,\mathfrak{g}}\\
        &+ 
        \frac{\epsilon^2}{2}\Big( \langle D^2H(\omega_0)\operatorname{ad}^*_\xi\omega_0,\operatorname{ad}^*_\xi\omega_0\rangle_{\mathfrak{g}^*,\mathfrak{g}} + \langle dH(\omega_0), (\operatorname{ad}_\xi^*)^2\omega_0 \rangle_{\mathfrak{g}^*,\mathfrak{g}} \Big)+\mathcal{O}(\epsilon^3).
    \label{eq:Taylor_expansion}
    \end{split}
    \end{align}
    Since
    \begin{equation*}
        \langle \psi, \operatorname{ad}^*_\xi\omega\rangle_{\mathfrak{g}^*,\mathfrak{g}} = \langle [\xi,\psi],\omega\rangle_{\mathfrak{g}^*,\mathfrak{g}} = \langle \xi, -\operatorname{ad}^*_{\psi}\omega \rangle_{\mathfrak{g}^*,\mathfrak{g}},
    \end{equation*}
    the second term on the right-hand side of the Taylor expansion \eqref{eq:Taylor_expansion} vanishes, thus concluding the proof of Proposition \ref{prop:Arnold}.
\end{proof}

\subsection{Euler's equations on the sphere} \label{subsec:Euler}

Let $M$ be a Riemannian 2-manifold and $G = \mathrm{Diff}_\mu(M)$ be the space of all volume preserving diffeomorphisms on $M$.
If the first co-homology of $M$ is trivial, we can identify the Lie algebra $\mathfrak{g}$ with $\mathcal{C}^\infty(M)/\mathbb{R}$ equipped with the Poisson bracket $\{\cdot,\cdot\}$.
The smooth dual $\mathfrak{g}^*$ is then $\mathcal{C}^\infty_0(M)$ with dual pairing via the $L^2$ inner product.
Thus, $\operatorname{ad}_\xi\psi =-\{\xi,\psi\}$ and $\operatorname{ad}^*_\xi\omega = \{ \xi, \omega\}$.
Furthermore, the inertia operator is given by $A = -\Delta \colon \mathcal{C}^\infty(M)/\mathbb{R}\to \mathcal{C}^\infty_0(M)$.
In this setting, we retrieve the 2-D Euler equations \eqref{eq:euler} as a particular case of the Lie--Poisson dynamics \eqref{eq:LP}.
For $\xi \in \mathcal{C}^\infty(M)/\mathbb{R}$, we have that the quadratic form in \eqref{eq:def_of_Q_corollary} reads
\begin{equation}
    Q(\xi) = \langle -\Delta^{-1} \{\xi,\omega_0 \}, \{\xi,\omega_0 \} \rangle_{L^2}-\langle \{\xi, -\underbrace{\Delta^{-1}\omega_0}_{\psi_0} \}, \{\xi,\omega_0 \} \rangle_{L^2}.
    \label{eq:quadratic_form_Q_for_Euler}
\end{equation}
We are now interested in finding conditions for the definiteness of $Q$ in the case of a specific steady state $\omega_0$ whose stream function is given via functional relation.
\begin{proposition} \label{prop:stability_geq-2}
    Let $\omega_0 = \Delta\psi_0 = f(\psi_0)$ for some analytic function $f:\erre\to\erre$ be a stationary solution to the 2-D Euler equations \eqref{eq:euler} on the manifold $M$.
    Denote by $\lambda_{\text{min}}>0$ the smallest eigenvalue of the operator $-\Delta$ on $M$.
    Then, the quadratic form in \eqref{eq:quadratic_form_Q_for_Euler} is negative definite if $-\lambda_{\text{min}}<f'<0$ everywhere, while it is positive definite if $f'>0$ everywhere.
\end{proposition}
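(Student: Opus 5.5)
Set $\eta:=\{\xi,\omega_0\}$, the variation $\operatorname{ad}^*_\xi\omega_0$. Since $\omega_0=f(\psi_0)$, the chain rule for the Poisson bracket gives $\eta=\{\xi,f(\psi_0)\}=f'(\psi_0)\{\xi,\psi_0\}$. Hence, wherever $f'(\psi_0)\neq 0$, we can write $\{\xi,\psi_0\}=\eta/f'(\psi_0)$, and the two terms of \eqref{eq:quadratic_form_Q_for_Euler} become
\begin{equation*}
    Q(\xi)=\langle -\Delta^{-1}\eta,\eta\rangle_{L^2}+\Big\langle \frac{\eta}{f'(\psi_0)},\eta\Big\rangle_{L^2}
    =\langle -\Delta^{-1}\eta,\eta\rangle_{L^2}+\int_M \frac{\eta^2}{f'(\psi_0)}\,d\mu .
\end{equation*}
I should check the sign in the second term: it is $-\langle\{\xi,-\psi_0\},\eta\rangle=\langle\{\xi,\psi_0\},\eta\rangle$, which gives $+\int\eta^2/f'$. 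The point of this step is that $Q$ is a quadratic form in $\eta$ alone, which is consistent with the preceding Remark.

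\emph{Case $f'>0$.} The operator $-\Delta^{-1}$ is positive on mean-zero functions. The variation $\eta=\{\xi,\omega_0\}$ has zero mean because $\int_M\{a,b\}\,d\mu=0$. So the first term is $\ge 0$, the second is $>0$ for $\eta\neq0$, and $Q$ is positive definite.

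\emph{Case $-\lambda_{\min}<f'<0$.} Expanding $\eta$ in eigenfunctions of $-\Delta$ gives $\langle-\Delta^{-1}\eta,\eta\rangle\le \lambda_{\min}^{-1}\|\eta\|^2_{L^2}$. Meanwhile $1/f'(\psi_0)<-1/\lambda_{\min}$ pointwise. Therefore
\begin{equation*}
    Q(\xi)\le \int_M\Big(\frac{1}{\lambda_{\min}}+\frac{1}{f'(\psi_0)}\Big)\eta^2\,d\mu<0 \quad \text{for } \eta\neq0 .
\end{equation*}
For strict (uniform) definiteness I would use compactness of $M$: since $f'\circ\psi_0$ is continuous, it stays a positive distance away from $-\lambda_{\min}$ and from $0$, which gives a uniform constant $-c\|\eta\|^2$.

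The main obstacle is justifying the division by $f'(\psi_0)$. Under the stated hypotheses $f'$ never vanishes, so this is immediate; the only things to state carefully are that the identity holds pointwise and that definiteness is meant on the tangent space of the orbit, i.e.\ for $\eta\neq0$.
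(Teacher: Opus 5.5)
Your proposal is correct and follows essentially the same route as the paper: use the functional relation to turn the second term of $Q$ into multiplication by $1/f'(\psi_0)$ acting on $\eta=\{\xi,\omega_0\}$, then compare it with the bound $\lambda_{\text{min}}^{-1}$ on the $L^2$ operator norm of $-\Delta^{-1}$. The only difference is cosmetic. The paper first assumes $f$ invertible and writes the multiplier as $(f^{-1})'(\omega_0)$, whereas you divide by $f'(\psi_0)$ directly; this is the same quantity and avoids the invertibility assumption, which is automatic under the hypotheses but not stated.
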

\begin{proof}
    Assume that $f$ is invertible.
    We observe that the functional relation between $\omega_0$ and $\psi_0$ implies
    \begin{equation*}
        \{ \xi, \psi_0\} = \{ \xi, f^{-1}(\omega_0)\} =  (f^{-1})'(\omega_0) \{ \xi,\omega_0\}.
    \end{equation*}
    Thus,
    \begin{equation}
        Q(\xi) = \langle [-\Delta^{-1}+(f^{-1})'(\omega_0)]\{\xi,\omega_0 \}, \{\xi,\omega_0 \} \rangle_{L^2}.
        \label{eq:Q_xi}
    \end{equation}
    Since the $L^2$ operator norm of $-\Delta^{-1}$ is $\lambda_{\text{min}}^{-1}$, the thesis follows recalling that $(f^{-1})'=\frac{1}{f'\circ f^{-1}}$.
\end{proof}
Proposition \ref{prop:stability_geq-2} tells that $\omega_0$ is an extremum for the Hamiltonian restricted to the leaf taking into account the conservation of all the Casimirs.
If the manifold $M$ enjoys any symmetries, these might further constraint the dynamics, providing additional conserved quantities.
A relevant case is $M=\esse^2$, for which $\lambda_{\text{min}}=2$.
Here, the rotational symmetry of the sphere allows to prove the conservation of angular momentum, which is also a feature of the Zeitlin model \eqref{eq:angmom}.
More precisely, if $x_\alpha$, $\alpha\in\{1,2,3\}$ denotes the $\alpha$-th coordinate of a point on $\esse^2\subset\R^3$, the following holds:
\begin{equation} 
    \diff{}{t}\langle\omega,x_\alpha\rangle=0.
    \label{eq:cons_ang_mom_Euler}
\end{equation}
This additional constraint improves the stability threshold $f'>-2$ in Proposition~\ref{prop:stability_geq-2} to $f'>-6$ \cite[Theorem 5]{Costantin2022}.
It can be also seen from the point-of-view of the quadratic form \eqref{eq:quadratic_form_Q_for_Euler}:
\begin{proposition} \label{prop:stability_geq-6}
    Let $\omega_0 = \Delta\psi_0 = f(\psi_0)$ for some real analytic function $f:\erre\to\erre$ be a stationary solution to the 2-D Euler equations \eqref{eq:euler} on $M=\esse^2$.
    Then, the quadratic form in \eqref{eq:quadratic_form_Q_for_Euler} is negative definite if $-6<f'<0$ everywhere, while it is positive definite if $f'>0$ everywhere.
\end{proposition}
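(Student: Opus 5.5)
The plan is to repeat the argument of Proposition~\ref{prop:stability_geq-2}, but to exploit conservation of angular momentum \eqref{eq:cons_ang_mom_Euler}. This allows us to restrict the admissible variations to a subspace on which $-\Delta^{-1}$ has a smaller norm. Assuming $f$ is invertible (as in the previous proof), we again arrive at \eqref{eq:Q_xi}:
\begin{equation*}
    Q(\xi)=\langle [-\Delta^{-1}+(f^{-1})'(\omega_0)]\eta,\eta\rangle_{L^2},\qquad \eta:=\{\xi,\omega_0\}.
\end{equation*}

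The key observation is that $\eta=\operatorname{ad}^*_\xi\omega_0$ is $L^2$-orthogonal to the first eigenspace $\operatorname{span}\{x_1,x_2,x_3\}$ of $-\Delta$ on $\esse^2$, which is the eigenspace with eigenvalue $\lambda_{\min}=2$. Indeed, on $\esse^2$ the coordinate functions $x_\alpha$ generate rotations, and the steady state $\omega_0=f(\psi_0)$ satisfies $\{\psi_0,\omega_0\}=0$. I would verify the orthogonality directly:
\begin{equation*}
    \langle \{\xi,\omega_0\},x_\alpha\rangle_{L^2}=\langle \xi,\{\omega_0,x_\alpha\}\rangle_{L^2}.
\end{equation*}
The right-hand side need not vanish for every $\xi$, so I would instead argue as follows. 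The tangent space to the coadjoint orbit intersected with the level set of angular momentum consists of those $\eta$ with $\langle\eta,x_\alpha\rangle=0$. Since angular momentum is conserved along the flow, the Lyapunov argument only needs definiteness of $Q$ on variations preserving it. Concretely, I would work with the energy--Casimir functional $H+C+\sum_\alpha c_\alpha\langle\omega,x_\alpha\rangle$ restricted to the joint level set. Equivalently, I would restrict $Q$ to those $\eta$ with $\mathbb{P}_1\eta=0$. This requires arguing that angular-momentum-preserving perturbations within the orbit correspond to such $\eta$ at second order. This is the step I expect to be the main obstacle, since the quadratic form is naturally defined on the full tangent space of the orbit.

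On the subspace $\{\mathbb{P}_1\eta=0\}$, the spectrum of $-\Delta$ starts at the second eigenvalue $\ell(\ell+1)=6$ (with $\ell=2$). Hence $\langle-\Delta^{-1}\eta,\eta\rangle\le\frac16\|\eta\|^2$, and this term is nonnegative.

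If $-6<f'<0$, then $(f^{-1})'(\omega_0)=1/f'(\psi_0)<-\frac16$ pointwise. Since $\psi_0$ is bounded, continuity and compactness give a uniform gap $1/f'\le-\frac16-\delta$ for some $\delta>0$ on the range of $\psi_0$. Combining the two bounds gives $Q(\eta)\le-\delta\|\eta\|^2$, which is negative definite.

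If $f'>0$, both terms are nonnegative and the second is strictly positive for $\eta\neq0$, so $Q$ is positive definite.

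If $f$ is not globally invertible, it suffices to use invertibility on the range of $\psi_0$. Monotonicity there follows from the sign condition on $f'$.
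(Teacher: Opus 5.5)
Your final estimates are the paper's. On $\{\mathbb{P}_1\eta=0\}$ you use $\langle-\Delta^{-1}\eta,\eta\rangle_{L^2}\le\frac16\|\eta\|^2$ together with $(f^{-1})'=1/(f'\circ f^{-1})$, exactly as the paper does; your uniform gap $\delta$ from compactness of the range of $\psi_0$ is a detail the paper leaves implicit. You differ in how $\mathbb{P}_1\eta=0$ is obtained, and your hesitation there is justified. The paper expands $\langle\operatorname{Ad}^*_{\exp(\epsilon\xi)}\omega_0-\omega_0,x_\alpha\rangle$ and sets it to zero for every $\xi$, citing conservation of angular momentum. But $\langle\omega,x_\alpha\rangle$ is conserved along the Euler flow, not constant on coadjoint orbits, so that step treats it as a Casimir. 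Your identity $\langle\{\xi,\omega_0\},x_\alpha\rangle_{L^2}=\langle\xi,\{\omega_0,x_\alpha\}\rangle_{L^2}$ shows it fails in general. The restriction is in fact necessary. Take solid-body rotation $\psi_0=x_3$, $\omega_0=-2x_3$ (so $f(s)=-2s$, $f'\equiv-2$) and $\xi=x_1$. The variation $\eta=\{x_1,\omega_0\}$ is a nonzero multiple of $x_2$, and \eqref{eq:Q_xi} gives $Q=\frac12\|\eta\|^2-\frac12\|\eta\|^2=0$, as it must since $H$ is rotation invariant. So the negative-definite half of the statement fails on the full tangent space. It must be read on $\{\eta=\{\xi,\omega_0\}:\mathbb{P}_1\eta=0\}$, which is precisely what your argument proves. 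The positive-definite half ($f'>0$) needs no restriction, since $-\Delta^{-1}\ge0$ on mean-zero functions.

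The step you flag as the main obstacle is not one, because $\omega_0$ is a critical point of $H$ on its orbit: $\langle\psi_0,\{\xi,\omega_0\}\rangle_{L^2}=\langle\xi,\{\omega_0,\psi_0\}\rangle_{L^2}=0$. Take any curve $\gamma(\epsilon)=\operatorname{Ad}^*_{g(\epsilon)}\omega_0$ with $g(0)=e$ and $u=g^{-1}g'$, $u(0)=\xi$. Then $\gamma'=\operatorname{ad}^*_u\gamma$, so $\gamma''(0)=(\operatorname{ad}^*_\xi)^2\omega_0+\operatorname{ad}^*_{u'(0)}\omega_0$. The last term is annihilated by $dH(\omega_0)$, by the same computation that closes the proof of Proposition~\ref{prop:Arnold}. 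Hence the second derivative of $H\circ\gamma$ at $0$ equals $Q(\xi)$ and depends only on the velocity $\eta=\operatorname{ad}^*_\xi\omega_0$, not on the acceleration of the curve. If $\gamma$ also stays in the level set $\{\langle\omega,x_\alpha\rangle=\langle\omega_0,x_\alpha\rangle,\ \alpha=1,2,3\}$, differentiating the constraint once gives $\mathbb{P}_1\eta=0$. Thus the Hessian of $H$ on the intersection of the orbit with the angular-momentum level set is exactly $Q$ restricted to your subspace, and no second-order matching of perturbations is needed.
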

\begin{proof}
    Recalling \eqref{eq:Taylor_exp_Ad_star}, the variation can be expressed as
    \begin{equation*}
        \operatorname{Ad}^*_{\operatorname{exp}(\epsilon \xi)}\omega_0-\omega_0=\sum_{k=0}^\infty \frac{\epsilon^k}{k!} (\operatorname{ad}^*_\xi)^k\omega_0-\omega_0=\sum_{k=1}^\infty \frac{\epsilon^k}{k!} (\operatorname{ad}^*_\xi)^k\omega_0.
    \end{equation*}
    Thus, the conservation of angular momentum \eqref{eq:cons_ang_mom_Euler} yields
    \begin{equation*}
        \left\langle\sum_{k=1}^\infty \frac{\epsilon^k}{k!} (\operatorname{ad}^*_\xi)^k\omega_0\,,\,x_\alpha\right\rangle=\sum_{k=1}^\infty\left\langle \frac{1}{k!} (\operatorname{ad}^*_\xi)^k\omega_0\,,\,x_\alpha\right\rangle\epsilon^k=0\quad\forall\epsilon.
    \end{equation*}
    This implies
    \begin{equation*}
        \left\langle \frac{1}{k!} (\operatorname{ad}^*_\xi)^k\omega_0\,,\,x_\alpha\right\rangle=0\quad\forall k\geq1,
    \end{equation*}
    whence in particular for $k=1$ we get $\langle\operatorname{ad}^*_\xi\omega_0,x_\alpha\rangle=0$.
    Since the first eigenspace of $-\Delta^{-1}$ is spanned by $\{x_\alpha\}_{\alpha=1}^3$, in \eqref{eq:Q_xi} we may consider the operator $-\Delta^{-1}$ restricted to the eigenspaces of degree $\geq2$.
    In this setting, the $L^2$ operator norm of $-\Delta^{-1}$ is $1/6$, whence the thesis follows using that $(f^{-1})'=\frac{1}{f'\circ f^{-1}}$.
\end{proof}

\section{Lyapunov stability for the Zeitlin model} \label{sec:application_to_Zeitlin}

The goal of this section is to prove the stability Theorem \ref{thm:stability_informal}.
Indeed, we are going to prove the following.
\begin{theorem}[Stability] \label{thm:stability}
    Consider $W_0,P_0\in\mathfrak{su}(n)$ satisfying \eqref{eq:Zeitlin_steady}.
    Denoting by $\{\mathrm{i}p_j\}_{j=1}^n$ the eigenvalues of $P_0$ and by $\{\mathrm{i}w_j\}_{j=1}^n$ the eigenvalues of $W_0$, assume that
    \begin{equation}
        \min_{\substack{m\in\{0,...,n-1\}\\ j\in\{1,...,n-m\}}}\left\{\frac{w_{j+m}-w_j}{p_{j+m}-p_j}\right\}>-6.
        \label{eq:assumption_thm1}
    \end{equation}
    Then, the steady state $W_0,P_0$ is Lyapunov stable in the Frobenius norm.
    In particular, if $W_0,P_0$ satisfy equation~\eqref{eq:Zeitlin_steady_f} for a real analytic function $f:\mathbb{R}\to\mathbb{R}$, then assumption \eqref{eq:assumption_thm1} is fulfilled if $f'>-6$ everywhere.
\end{theorem}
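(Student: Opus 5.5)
The plan is to apply Arnold's method in the form of Corollary \ref{cor:Arnold} to Zeitlin's model, with $\mathfrak{g}=\mathfrak{su}(n)$ identified with its dual through the pairing \eqref{eq:Frobenius_inner_prod}. We take $A=-\Delta_n$ (positive on $\mathfrak{su}(n)$, up to the $\hbar$ scaling of time, which does not affect stability). Then $\operatorname{ad}^*_\xi W_0=c[\xi,W_0]$ and $\operatorname{ad}_\xi P_0=[\xi,P_0]$, with a harmless constant sign. Substituting into \eqref{eq:def_of_Q_corollary} gives
\begin{equation*}
Q(\xi)=\langle -\Delta_n^{-1}[\xi,W_0],[\xi,W_0]\rangle-\langle [\xi,P_0],[\xi,W_0]\rangle ,
\end{equation*}
which is the matrix analogue of \eqref{eq:quadratic_form_Q_for_Euler}.

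Since $[P_0,W_0]=0$, we diagonalise both simultaneously: $P_0=U\,\mathrm{diag}(\mathrm{i}p_j)U^\dagger$ and $W_0=U\,\mathrm{diag}(\mathrm{i}w_j)U^\dagger$. Writing $\eta=U^\dagger\xi U$, we get $([\xi,W_0])_{jk}\sim \mathrm{i}(w_k-w_j)\eta_{jk}$ and $([\xi,P_0])_{jk}\sim\mathrm{i}(p_k-p_j)\eta_{jk}$ in the rotated basis. The second term of $Q$ therefore becomes
\begin{equation*}
-\tfrac1n\sum_{j,k}(p_k-p_j)(w_k-w_j)|\eta_{jk}|^2 .
\end{equation*}
Set $Y=[\xi,W_0]$ (the variation). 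Whenever $w_k\neq w_j$ we have $|\eta_{jk}|^2(p_k-p_j)(w_k-w_j)=|Y_{jk}|^2\,\frac{p_k-p_j}{w_k-w_j}$. Pairs with $w_k=w_j$ contribute nothing to $Y$. For them the ratio assumption must give $p_k=p_j$ as well, or those directions must be shown to lie in the stabiliser; by the Remark, $Q$ depends only on $Y$, so they can be quotiented out. Hence
\begin{equation*}
Q(Y)=\langle -\Delta_n^{-1}Y,Y\rangle-\tfrac1n\sum_{j,k}\frac{1}{r_{jk}}|Y_{jk}|^2,\qquad r_{jk}=\frac{w_k-w_j}{p_k-p_j}\ge L .
\end{equation*}

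Next we exploit the conservation of angular momentum, exactly as in the proof of Proposition \ref{prop:stability_geq-6}. Expanding $\operatorname{Ad}^*_{\exp(\epsilon\xi)}W_0$ via \eqref{eq:Taylor_exp_Ad_star} and using \eqref{eq:angmom} gives $\langle Y,X_\alpha\rangle=0$, so $Y$ lies in $\bigoplus_{\ell\ge2}$ of the eigenspaces of $\Delta_n$. The eigenvalues of $-\Delta_n$ are $\ell(\ell+1)$, exactly as on $\mathbb{S}^2$ (Hoppe--Yau), so $0<\langle-\Delta_n^{-1}Y,Y\rangle\le\frac16\|Y\|^2$. We then split into cases. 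If $L>0$, every $1/r_{jk}>0$ and $Q$ is the difference of $\langle-\Delta_n^{-1}Y,Y\rangle$ and a form bounded below by a positive multiple of $\|Y\|^2$; this is where care is needed. The correct statement is that $Q$ is definite in one sign when all $r_{jk}<0$ with $r_{jk}>-6$, since then $-1/r_{jk}>1/6$ and $Q\ge(\min_{jk}(-1/r_{jk})-\frac16)\|Y\|^2>0$. When all $r_{jk}>0$, $Q$ is negative definite only if also $1/r_{jk}>\frac16$, i.e. $r_{jk}<6$.

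The main obstacle is the mixed-sign case, together with the sign conventions. Note that the sign of $Q$ differs from that in Proposition \ref{prop:stability_geq-2}, so conventions must be tracked carefully; I expect that after fixing them the condition becomes $-1/r_{jk}$ versus $\frac16$ uniformly, giving definiteness exactly when $r_{jk}>-6$ for all pairs. Precisely, I would try to show $Q(Y)\le(\frac16+\max_{jk}\frac{1}{-r_{jk}}\cdots)$, reorganised so that a single bound $\frac1n\sum|Y_{jk}|^2/r_{jk}$ against $\|Y\|^2/6$ suffices. Once a uniform definiteness bound $|Q(Y)|\ge c\|Y\|^2$ on the finite-dimensional orbit is obtained, Lyapunov stability follows in the standard finite-dimensional way. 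We use $H$, restricted to the compact coadjoint orbit intersected with the angular-momentum level set, as a Lyapunov function. Nearby orbits, in which both Casimirs and $\boldsymbol L$ vary continuously, are handled by a Hessian-continuity argument, or alternatively via the energy--Casimir functional $H+\sum_k c_kC_k^n$. Finally, for \eqref{eq:Zeitlin_steady_f}, the mean value theorem gives $r_{jk}=f'(\theta)$ for some $\theta$ between $p_j$ and $p_k$, so $f'>-6$ everywhere implies \eqref{eq:assumption_thm1}.
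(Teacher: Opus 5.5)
Your route (Arnold's second variation, simultaneous diagonalisation, restriction to $\ell\ge2$ via angular momentum with the bound $\frac16$, mean value theorem for \eqref{eq:Zeitlin_steady_f}) is the paper's route. However, the sign of the cross term in your $Q$ is wrong, and it is not a harmless constant. For the pairing \eqref{eq:Frobenius_inner_prod} one has $\langle[\xi,W],\psi\rangle=-\langle W,[\xi,\psi]\rangle$, so $\operatorname{ad}_\xi$ and $\operatorname{ad}^*_\xi$ are multiples of $[\xi,\cdot\,]$ with opposite constants. Combined with $A^{-1}W_0=-P_0$, the two signs cancel, and up to a positive factor
\[
Q=\langle-\Delta_n^{-1}Y,Y\rangle+\langle[\xi,P_0],Y\rangle=\langle-\Delta_n^{-1}Y,Y\rangle+\frac1n\sum_{j\neq k}\frac{|Y_{jk}|^2}{r_{jk}},
\]
as in \eqref{eq:def_of_Q_Zeitlin}. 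With your minus sign you would get definiteness for all $r_{jk}<0$ with no threshold, or for all $r_{jk}\in(0,6)$. That is the mirror image of the theorem; the threshold $-6$ cannot come out of it. With the correct sign you get at once $\min_{jk}(1/r_{jk})\|Y\|^2\le Q\le(\frac16+\max_{jk}1/r_{jk})\|Y\|^2$. This is precisely the paper's estimate with its constants $c,C$: $Q$ is positive definite if all $r_{jk}>0$ and negative definite if all $r_{jk}\in(-6,0)$. Pairs with $w_j=w_k$ drop out of both sides, so you need no assumption $p_j=p_k$ there.

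The mixed-sign case, which you leave open and hope to settle by one uniform bound, cannot be settled by any definiteness argument, because $Q$ is then genuinely indefinite on the admissible variations. Take $n=3$ and the zonal steady state, diagonal in the $X_3$-eigenbasis, with $w=(3,-2,-1)$ and $p=(-\frac76,\frac13,\frac56)$. This is an $\ell=1$ plus an $\ell=2$ zonal harmonic, with $P_0=\Delta_n^{-1}W_0$. Then $r_{12}=-\frac{10}{3}$, $r_{23}=2$, $r_{13}=-2$, so \eqref{eq:assumption_thm1} holds. The variations orthogonal to $X_1,X_2,X_3$ are exactly the $\ell=2$, $m=\pm1,\pm2$ directions. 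On these, $Q=\frac1n\sum_{j\neq k}(\frac16+\frac1{r_{jk}})|Y_{jk}|^2$, with coefficients $-\frac{2}{15},\frac23,-\frac13$ for the pairs $(1,2),(2,3),(1,3)$. The $m=\pm2$ direction (only $Y_{13}\neq0$) gives $Q<0$, while the $m=\pm1$ direction ($|Y_{12}|=|Y_{23}|$) gives $Q>0$.

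The paper's proof does not handle this case either. After deriving the same sandwich, it only observes definiteness for $c>0$ or $C<-\frac16$ and then concludes, so it covers exactly the two same-sign regimes that your corrected argument covers. Closing the mixed case would need an idea beyond Arnold's definiteness test. In this $n=3$ example stability does hold, but only because $H$ is there a function of $\|W\|^2$ and $|\boldsymbol{L}|^2$, so the flow is a rigid rotation about the angular-momentum axis.
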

The key idea of the proof is to check the definiteness of the quadratic form \eqref{eq:def_of_Q_corollary}, as done in Proposition \ref{prop:stability_geq-6}.
Given $n\in\enne$, $n\geq2$, we consider the Lie--Poisson dynamics \eqref{eq:LP} with the choice $G=\mathrm{SU}(n)$, $\mathfrak{g}=\mathfrak{su}(n)$, and $\mathfrak{g}^*\simeq \mathfrak{su}(n)$.
Here, the pairing is given by the Frobenius inner product \eqref{eq:Frobenius_inner_prod}.
We then have $\operatorname{ad}_{X}P = -\frac{1}{\hbar}[X,P]$ and $\operatorname{ad}^*_{X}W = \frac{1}{\hbar}[X,W]$, with $[\cdot,\cdot]$ being the matrix commutator.
Let the Hamiltonian in \eqref{eq:geodesic_motion} be defined in terms of the inertia operator $A = -\Delta_n^{-1}$, where $\Delta_n$ denotes the Hoppe--Yau Laplacian.
Under these assumptions, from \eqref{eq:LP} we retrieve the differential matrix system \eqref{eq:Zeitlin}.

In this setting, we now aim to prove our result Theorem \ref{thm:stability} regarding the Lyapunov stability of a steady state $W_0,P_0$ fulfilling equation~\eqref{eq:Zeitlin_steady}.
Since our setting is finite-dimensional, Lyapunov stability in the Frobenius norm immediately follows for extrema of the Hamiltonian, see for instance \cite[Chapter II, Theorem 3.3]{ArKh1998} or \cite[Lemma 1.1]{elgindi2023remarkstabilityenergymaximizers}.
Due to this fact, our plan is to rewrite Corollary \ref{cor:Arnold} in the setting of the Zeitlin model, and then check the definiteness of the obtained quadratic form.
Namely, we need to show that under the assumptions in Theorem \ref{thm:stability} the quadratic form on $\mathfrak{su}(n)$
\begin{equation}
      Q(X) = \langle[X,W_0],-\Delta_n^{-1}[X,W_0] + [X,P_0]\rangle
      \label{eq:def_of_Q_Zeitlin}
\end{equation}
is positive or negative definite.
To do so, the key idea is to perform the computations indexing by sub-diagonals, instead of indexing by rows and columns.
Namely, let us denote by $(X)_{m:j}$ the $j$-th element of the $m$-th diagonal of the matrix $X$.
This is the idea in \cite[Lemma 5.3]{modin2025briefintroductionmatrixhydrodynamics}, which we will use extensively in the remainder of the paper.
For convenience, we report the lemma here.
\begin{lemma}[\cite{modin2025briefintroductionmatrixhydrodynamics}] \label{lemma:klas}
    Consider any $X,W,P\in\mathfrak{u}(n)$ with $[P,W]=0$.
    Let $\Lambda\in\mathrm{U}(n)$ be such that both $\Lambda^\dagger W\Lambda$ and $\Lambda^\dagger P\Lambda$ are diagonal matrices.
    Denote by $\{\mathrm{i}p_j\}_{j=1}^n$ and $\{\mathrm{i}w_j\}_{j=1}^n$ the eigenvalues of $P$ and $W$ ordered according to the eigenbasis $\Lambda$.
    Then, the following formula holds:
    \begin{equation*}
        \langle [X,W],[X,P]\rangle=\frac{1}{n}\sum_{m=-n+1}^{n-1}\sum_{j=1}^{n-|m|}|(\Lambda^\dagger X\Lambda)_{m:j}|^2(p_{j+|m|}-p_j)(w_{j+|m|}-w_j).
    \end{equation*}
\end{lemma}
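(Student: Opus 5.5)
The plan is to reduce everything to the common eigenbasis and then compute entrywise. Conjugation by a unitary matrix preserves both the commutator and the scaled Frobenius inner product \eqref{eq:Frobenius_inner_prod}. Indeed, for any $\Lambda\in\mathrm{U}(n)$ we have $\Lambda^\dagger[X,W]\Lambda=[\Lambda^\dagger X\Lambda,\Lambda^\dagger W\Lambda]$ and $\operatorname{tr}((\Lambda^\dagger A\Lambda)^\dagger \Lambda^\dagger B\Lambda)=\operatorname{tr}(A^\dagger B)$. Setting $Y:=\Lambda^\dagger X\Lambda$, $D_W:=\Lambda^\dagger W\Lambda=\mathrm{i}\,\mathrm{diag}(w_1,\dots,w_n)$ and $D_P:=\Lambda^\dagger P\Lambda=\mathrm{i}\,\mathrm{diag}(p_1,\dots,p_n)$, we therefore get
\begin{equation*}
\langle [X,W],[X,P]\rangle=\langle [Y,D_W],[Y,D_P]\rangle .
\end{equation*}
The hypothesis $[P,W]=0$, together with the assumption on $\Lambda$, is exactly what allows both matrices to be diagonal simultaneously.

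Next I compute the commutator with a diagonal matrix entrywise. For $D=\mathrm{i}\,\mathrm{diag}(d_1,\dots,d_n)$ one has $([Y,D])_{kl}=Y_{kl}\,\mathrm{i}d_l-\mathrm{i}d_kY_{kl}=\mathrm{i}(d_l-d_k)Y_{kl}$. Hence
\begin{equation*}
\langle [Y,D_W],[Y,D_P]\rangle=\frac1n\sum_{k,l=1}^n\overline{\mathrm{i}(w_l-w_k)Y_{kl}}\;\mathrm{i}(p_l-p_k)Y_{kl}=\frac1n\sum_{k,l=1}^n|Y_{kl}|^2(w_l-w_k)(p_l-p_k).
\end{equation*}
Here I use that $\overline{\mathrm{i}}\,\mathrm{i}=1$ and that the $w_j,p_j$ are real, since $W,P\in\mathfrak{u}(n)$. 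No further property of $X$ is needed; in particular, skew-Hermitianity of $X$ plays no role.

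The remaining step, and the only one requiring care, is the reindexing of the double sum over $(k,l)$ by diagonals. Every pair $(k,l)$ lies on exactly one diagonal $m=l-k\in\{-n+1,\dots,n-1\}$, and that diagonal contains $n-|m|$ entries. For $m\geq0$, the $j$-th entry of the $m$-th diagonal is $(k,l)=(j,j+m)$, so the summand is $|Y_{m:j}|^2(w_{j+m}-w_j)(p_{j+m}-p_j)$. For $m<0$, the $j$-th entry is $(k,l)=(j+|m|,j)$, and the summand is $|Y_{m:j}|^2(w_j-w_{j+|m|})(p_j-p_{j+|m|})$. Since the product of two differences is invariant under simultaneously swapping both orders, this equals $|Y_{m:j}|^2(w_{j+|m|}-w_j)(p_{j+|m|}-p_j)$. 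Summing over $m$ and $j$ gives exactly the claimed formula.

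I expect no genuine obstacle. The only point to watch is fixing the convention for $(X)_{m:j}$ when $m<0$ consistently, and this sign is harmless because of the symmetric product just noted.
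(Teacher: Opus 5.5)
Your proof is correct: unitary invariance of the commutator and of the scaled Frobenius pairing, the entrywise identity $([Y,D])_{kl}=\mathrm{i}(d_l-d_k)Y_{kl}$, and the reindexing of $(k,l)$ by the diagonals $m=l-k$ give the formula exactly, and the symmetry of the product of differences takes care of $m<0$. The paper does not prove this lemma itself but cites it, and the idea it singles out (working in the common eigenbasis and indexing by sub-diagonals rather than by rows and columns) is precisely the route you take.
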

We are now ready to prove Theorem \ref{thm:stability}.
\begin{proof}[Proof of Theorem \ref{thm:stability}]
    Lemma \ref{lemma:klas} gives the existence of $\Lambda\in\mathrm{U}(n)$ such that
    \begin{equation}
        \langle[X,W_0],[X,P_0]\rangle=\frac{1}{n}\sum_{m=-n+1}^{n-1}\sum_{j=1}^{n-|m|}|(\Lambda^\dagger X\Lambda)_{m:j}|^2(p_{j+|m|}-p_j)(w_{j+|m|}-w_j),
        \label{eq:proof_31_1}
    \end{equation}
    where $\{\mathrm{i}p_j\}_{j=1}^n$ are the eigenvalues of $P_0$ and $\{\mathrm{i}w_j\}_{j=1}^n$ are the eigenvalues of $W_0$.
    Defining
    \begin{equation*}
        C:=\max_{\substack{m\in\{0,...,n-1\}\\ j\in\{1,...,n-m\}}}\left\{\frac{p_{j+m}-p_j}{w_{j+m}-w_j}\right\},
    \end{equation*}
    from \eqref{eq:proof_31_1} and another application of Lemma \ref{lemma:klas} we get the upper bound
    \begin{equation}
        \langle[X,W_0],[X,P_0]\rangle\leq C\|[X,W_0]\|^2.
        \label{eq:proof_31_2}
    \end{equation}
    Arguing similarly, we can also get the lower bound
    \begin{equation}
        \underbrace{\min_{\substack{m\in\{0,...,n-1\}\\ j\in\{1,...,n-m\}}}\left\{\frac{p_{j+m}-p_j}{w_{j+m}-w_j}\right\}}_{=:c}\|[X,W_0]\|^2\leq\langle[X,W_0],[X,P_0]\rangle.
        \label{eq:proof_31_3}
    \end{equation}
    Combining \eqref{eq:proof_31_2} and \eqref{eq:proof_31_3}, we see that the quadratic form $Q$ in \eqref{eq:def_of_Q_Zeitlin} satisfies
    \begin{equation}
        c \lVert [X,W_0] \rVert^2 \leq Q(X) + \langle\Delta_n^{-1}[X,W_0],[X,W_0]\rangle \leq C\lVert [X,W_0] \rVert^2.
        \label{eq:proof_31_4}
    \end{equation}
    Now, the conservation of angular momentum \eqref{eq:angmom} allows us to argue as in the proof of Proposition \ref{prop:stability_geq-6}.
    Since $-\Delta_n^{-1}$ is a positive definite operator, and its operator norm restricted to the eigenspaces of degree $\geq2$ is $1/6$, from \eqref{eq:proof_31_4} we readily get
    \begin{equation*}
        c \lVert [X,W_0] \rVert^2 \leq Q(X) \leq \left(\frac{1}{6} + C \right)\lVert [X,W_0] \rVert^2.
    \end{equation*}
    In particular, $Q$ is negative definite if $C<-1/6$ and positive definite if $c>0$.
    Assuming \eqref{eq:assumption_thm1}, the first part of Theorem \ref{thm:stability} follows.

    Regarding the case of $W_0,P_0$ satisfying \eqref{eq:Zeitlin_steady_f}, it is enough to observe that $W_0 = \mathrm{i}f(-\mathrm{i}P_0)$ implies $w_j=f(p_j)$.
    Fix $m\in\{0,...,n-1\}$ and $j\in\{1,...,n-m\}$.
    Then, the mean value theorem yields the existence of a number $\xi_{jm}\in [w_j, w_{j+m}]$ (without loss of generality, we can diagonalise $W_0$ in such a way that all the eigenvalues are sorted) such that
    \begin{equation*}
        \frac{p_{j+m}-p_j}{w_{j+m}-w_j}=\frac{f^{-1}(w_{j+m})-f^{-1}(w_j)}{w_{j+m}-w_j}=(f^{-1})'(\xi_{im}).
    \end{equation*}
    This concludes the proof of Theorem \ref{thm:stability}.
\end{proof}

\section{Rigidity results} \label{sec:rigidity}

The goal of this section is to prove the rigidity Theorem \ref{thm:rigidity_informal}.
To do so, a few comments are necessary to clarify the statement concerning the action of the symmetry group $\mathrm{SO}(3)$. 
Indeed, a remarkable feature of Zeitlin's model on the sphere is that it preserves the rotational symmetry corresponding to the action of the Lie group $\mathrm{SO}(3)$ on $\mathbb{S}^2$.
In the matrix case, the action of $\mathrm{SO}(3)$ on $\mathfrak{su}(n)$ (corresponding to the induced action of $\mathrm{SO}(3)$ on $\mathcal C^\infty(\mathbb{S}^2)$) is generated by the matrices $X_1,X_2,X_3$ (which indeed give a unitary representation of $\mathfrak{so}(3)$).
The action is constructed in the following way: for a rotational matrix $R\in \mathrm{SO}(3)$, first compute its logarithm $\rho = \log R \in \mathfrak{so}(3)$.
We have $\mathfrak{so}(3)\simeq \mathbb{R}^3$, and we denote by $\{\rho_\alpha\}_{\alpha=1}^3$ the components of $\rho$ with respect to the canonical basis.
The action of $R$ on $W\in \mathfrak{su}(n)$ is then
\[
R\cdot W = F_R W F_R^\dagger,
\]
where $$F_R = \exp\left(\sum_{\alpha=1}^3 \rho_\alpha X_\alpha\right).$$
For more details, see~\cite{MoVi2026}.

Consider now $W_0,P_0\in\mathfrak{su}(n)$ satisfying \eqref{eq:Zeitlin_steady}.
Denote by $\{\mathrm{i}p_j\}_{j=1}^n$ the eigenvalues of $P_0$ and by $\{\mathrm{i}w_j\}_{j=1}^n$ the eigenvalues of $W_0$, as in Lemma \ref{lemma:klas}, and assume that \eqref{eq:assumption_thm1} holds.
The rigidity results in Theorem~\ref{thm:rigidity_informal} are captured in the following two propositions.
\begin{proposition}\label{prop:rigidity_-6}
    Assume that
    \begin{equation}
        \min_{\substack{m\in\{0,...,n-1\}\\ j\in\{1,...,n-m\}}}\left\{\frac{w_{j+m}-w_j}{p_{j+m}-p_j}\right\}>-6.
        \label{eq:assumption_thm1_prop}
    \end{equation}
    Then there exists $R\in\mathrm{SO}(3)$ such that $R\cdot W_0$ is a diagonal matrix.
\end{proposition}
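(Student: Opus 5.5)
The plan is to exploit the $\mathrm{SO}(3)$-equivariance of the Hoppe--Yau Laplacian. This equivariance forces the rotation generators to produce \emph{exact} relations between $[X,W_0]$ and $[X,P_0]$, which we then play against the eigenvalue bound of Lemma~\ref{lemma:klas}.

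\textbf{Choice of rotation.} Since $\Delta_n$ commutes with the action $W\mapsto F_RWF_R^\dagger$, the pair $R\cdot W_0,R\cdot P_0$ is again a steady state. It has the same eigenvalues, so assumption \eqref{eq:assumption_thm1_prop} is unchanged. The projection $\mathbb{P}_1P_0$ lies in $\operatorname{span}\{X_1,X_2,X_3\}$, and the $\mathrm{SO}(3)$ action on this span is the standard rotation of $\mathbb{R}^3$. Hence we can choose $R$ so that $\mathbb{P}_1(R\cdot P_0)$ is a multiple of $X_3$. We rename $R\cdot W_0,R\cdot P_0$ as $W_0,P_0$.

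\textbf{Exact relation for $X_3$.} Take $Z:=[X_3,P_0]$. Because $\Delta_n$ commutes with $\operatorname{ad}_{X_\alpha}$ (the infinitesimal form of equivariance), we get $\Delta_n Z=[X_3,\Delta_nP_0]=[X_3,W_0]$. The degree-one part of $Z$ vanishes. Indeed,
\[
\langle Z,X_\beta\rangle=\langle P_0,[X_\beta,X_3]\rangle\cdot(\pm1),
\]
and $[X_\beta,X_3]$ is a multiple of $X_1$ or $X_2$ (or is zero), which is orthogonal to $\mathbb{P}_1P_0\propto X_3$. So $Z$ lies in the eigenspaces of degree $\geq2$. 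There $-\Delta_n$ has smallest eigenvalue $6$, which gives
\[
6\|Z\|^2\le-\langle Z,\Delta_nZ\rangle=-\langle [X_3,P_0],[X_3,W_0]\rangle .
\]

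\textbf{Eigenvalue bound.} By Lemma~\ref{lemma:klas} (in a common eigenbasis $\Lambda$, writing $x_{m:j}$ for the entries of $\Lambda^\dagger X_3\Lambda$),
\[
-\langle [X_3,P_0],[X_3,W_0]\rangle=-\frac1n\sum_{m,j}|x_{m:j}|^2(p_{j+|m|}-p_j)(w_{j+|m|}-w_j).
\]
By \eqref{eq:assumption_thm1_prop}, each term with $p_{j+|m|}\neq p_j$ satisfies
\[
-(p_{j+|m|}-p_j)(w_{j+|m|}-w_j)<6(p_{j+|m|}-p_j)^2 .
\]
The terms with $p_{j+|m|}=p_j$ vanish on both sides; there $w_{j+|m|}=w_j$ as well, as is implicit in the hypothesis being well-defined. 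Applying Lemma~\ref{lemma:klas} again to $\|Z\|^2$ yields $-\langle Z,\Delta_nZ\rangle\le6\|Z\|^2$, with equality only if every term with $p_{j+|m|}\ne p_j$ has $x_{m:j}=0$, i.e.\ only if $Z=0$. Combining with the previous inequality forces equality, hence $Z=[X_3,P_0]=0$. Then $[X_3,W_0]=\Delta_nZ=0$.

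\textbf{Conclusion.} $X_3$ is diagonal in the standard basis with $n$ distinct eigenvalues. Any matrix commuting with it is therefore diagonal, so $R\cdot W_0$ is diagonal.

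The main obstacle I expect is justifying the equivariance facts cleanly, namely $[\Delta_n,\operatorname{ad}_{X_\alpha}]=0$ and the identification of $\mathbb{P}_1$ with $\operatorname{span}\{X_\alpha\}$ together with the resulting vanishing of $\mathbb{P}_1Z$. A secondary point is handling degenerate eigenvalue differences in the strict inequality.
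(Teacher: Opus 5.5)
Your proposal is correct and follows the paper's proof essentially step for step. You rotate so that $\mathbb{P}_1(R\cdot P_0)\propto X_3$, use $[\Delta_n,\operatorname{ad}_{X_3}]=0$ and $\mathbb{P}_1[X_3,P_0]=0$ to get $\langle\Delta_n Z,Z\rangle\le-6\|Z\|^2$, and set this against the Lemma~\ref{lemma:klas} bound $\langle[X_3,W_0],[X_3,P_0]\rangle\ge-6\|Z\|^2$ (strict unless $Z=0$) to force $[X_3,P_0]=0$ and hence $[X_3,W_0]=0$. The differences are cosmetic: you argue via the equality case rather than by contradiction, and you spell out the final step (that $X_3$ has simple spectrum, so its commutant is diagonal), which the paper leaves implicit.
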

\begin{proposition}\label{prop:rigidity_-2}
    Assume that
    \begin{equation}
        \min_{\substack{m\in\{0,...,n-1\}\\ j\in\{1,...,n-m\}}}\left\{\frac{w_{j+m}-w_j}{p_{j+m}-p_j}\right\}>-2.
        \label{eq:min>-2}
    \end{equation}
    Then, $W_0=0$.
\end{proposition}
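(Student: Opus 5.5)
The plan is an energy-type argument that tests the steady-state relation against the rotation generators $X_1,X_2,X_3$, rather than working with the full quadratic form $Q$. It rests on the standard structure of the Hoppe--Yau Laplacian as the quantised Casimir of $\mathfrak{so}(3)$. Up to the normalisation of the $X_\alpha$ (which I fix so that the constants match), we have
\[
\Delta_n Y = \sum_{\alpha=1}^3 [X_\alpha,[X_\alpha,Y]] ,
\]
and its spectrum on $\mathfrak{su}(n)$ is $\{-\ell(\ell+1)\}_{\ell=1}^{n-1}$. In particular $-\Delta_n\ge 2$, with equality exactly on the first eigenspace spanned by the $X_\alpha$. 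Since the $X_\alpha$ are skew-Hermitian, $\operatorname{ad}_{X_\alpha}$ is skew-adjoint for $\langle\cdot,\cdot\rangle$. Checking this Casimir formula and the scaling constants against \cite{Hoppe1998} is the step I expect to need most care. The rest of the argument is insensitive to a positive rescaling of the $X_\alpha$.

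The first step is to compute two quantities using skew-adjointness and $\Delta_n P_0=W_0$:
\[
\sum_\alpha \langle [X_\alpha,W_0],[X_\alpha,P_0]\rangle = -\langle W_0,\Delta_n P_0\rangle = -\|W_0\|^2,
\qquad
E:=\sum_\alpha \|[X_\alpha,P_0]\|^2 = \langle P_0,-\Delta_n P_0\rangle \ge 0 .
\]
The spectral bound $-\Delta_n\ge2$ then gives
\[
\|W_0\|^2=\langle -\Delta_n P_0,-\Delta_n P_0\rangle \ge 2\langle P_0,-\Delta_n P_0\rangle = 2E .
\]

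The second step applies Lemma~\ref{lemma:klas} twice for each $\alpha$, once to the pair $(W_0,P_0)$ and once to $(P_0,P_0)$, in a common eigenbasis $\Lambda$. Write $L$ for the minimum in \eqref{eq:min>-2}. Every summand satisfies $(p_{j+|m|}-p_j)(w_{j+|m|}-w_j)\ge L\,(p_{j+|m|}-p_j)^2$. When $p_{j+|m|}\neq p_j$ this is the definition of $L$. When $p_{j+|m|}=p_j$ both sides vanish, so such terms are harmless; I read the minimum as taken over pairs with distinct $p$'s. Summing with the weights $|(\Lambda^\dagger X_\alpha\Lambda)_{m:j}|^2$ gives
\[
\langle [X_\alpha,W_0],[X_\alpha,P_0]\rangle \ge L\,\|[X_\alpha,P_0]\|^2 ,
\]
and summing over $\alpha$ gives $-\|W_0\|^2\ge L\,E$.

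Combining the two steps gives $-2E\ge -\|W_0\|^2\ge LE$, that is, $(L+2)E\le 0$. Since $L>-2$ and $E\ge0$, this forces $E=0$. Hence $\langle P_0,-\Delta_n P_0\rangle=0$, so $P_0=0$ by positive definiteness of $-\Delta_n$ on $\mathfrak{su}(n)$, and therefore $W_0=\Delta_n P_0=0$.

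It is worth noting why the threshold is $-2$ rather than $-6$. The inequality $\|W_0\|^2\ge 2E$ is sharp only on the $\ell=1$ modes. Under \eqref{eq:assumption_thm1_prop}, the analogous argument with $-6$ fails exactly because of those modes, which is consistent with Proposition~\ref{prop:rigidity_-6} allowing nontrivial (rotated-diagonal) states.
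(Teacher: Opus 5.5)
Your proof is correct and follows the paper's argument essentially step for step: the identity $\sum_\alpha\langle[X_\alpha,W_0],[X_\alpha,P_0]\rangle=-\|W_0\|^2$ (up to the $\hbar^{-2}$ normalisation), the termwise bound from Lemma~\ref{lemma:klas} applied to both $(W_0,P_0)$ and $(P_0,P_0)$, and the spectral inequality $\ell^2(\ell+1)^2\ge 2\ell(\ell+1)$. The only difference is cosmetic: you argue directly via $(L+2)E\le 0$ rather than by contradiction with a strict inequality, and in doing so you explicitly handle the degenerate pairs with $p_{j+|m|}=p_j$, which the paper leaves implicit.
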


The remainder of the section is devoted to the proofs of Propositions \ref{prop:rigidity_-6} and \ref{prop:rigidity_-2}, in Section \ref{subsec:f_minus_6} and \ref{subsec:f_minus_2} respectively.
The argument that we employ is inspired to \cite[Theorem 4]{Costantin2022}.

\subsection{Proof of Proposition \ref{prop:rigidity_-6}} \label{subsec:f_minus_6}

Consider the steady state $W_0,P_0$ \eqref{eq:Zeitlin_steady}.
We begin by observing that it is enough to show $[R\cdot W_0,X_3]=0$ for some $R\in\mathrm{SO}(3)$.
In this direction, we may exploit the following property of the first nontrivial eigenspace of the Hoppe--Yau Laplacian $\Delta_n$, that coincides with $\operatorname{Span}\{X_\alpha\}_{\alpha=1}^3$.
Namely, there exists $R\in\mathrm{SO}(3)$ such that $\mathbb{P}_1(R\cdot W_0)$ and $\mathbb{P}_1(R\cdot P_0)$ are diagonal matrices.
One can also check that the operator $R\cdot$ commutes with the operator $\Delta_n$, and that the matrix $R\cdot W_0$ commutes with the matrix $R\cdot P_0$.
Furthermore, since $\Delta_n$ commutes with the operator $[X_3,\cdot]$, we can write
\begin{equation}
    \langle[R\cdot W_0,X_3],[R\cdot P_0,X_3]\rangle=\langle[\Delta_nR\cdot P_0,X_3],[R\cdot P_0,X_3]\rangle=\langle\Delta_n[R\cdot P_0,X_3],[R\cdot P_0,X_3]\rangle.
    \label{eq:proof_41_A}
\end{equation}
Since $[R\cdot P_0,R\cdot W_0]=0$, we can apply Lemma \ref{lemma:klas} to infer the existence of a matrix $\Lambda\in\mathrm{U}(N)$ such that
\begin{equation}
    \langle[R\cdot W_0,X_3],[R\cdot P_0,X_3]\rangle=\frac{1}{n}\sum_{m=-n+1}^{n-1}\sum_{j=1}^{n-|m|}|(\Lambda^\dagger X_3\Lambda)_{m:j}|^2(p_{j+|m|}-p_j)(w_{j+|m|}-w_j),
    \label{eq:proof_41_1}
\end{equation}
where $\{\mathrm{i}p_j\}_{j=1}^n$ are the eigenvalues of $P_0$ and $\{\mathrm{i}w_j\}_{j=1}^n$ are the eigenvalues of $W_0$, coherently with the notation of Lemma \ref{lemma:klas} (recall that rotations preserve the eigenvalues).
Now, assume by contradiction that
\begin{align}
\begin{split}
    \exists(m,j)\in\{-n+1,\dots,n-1\}\times\{1,\dots,N\}\\
    \text{with }j\leq N-|m|\quad\text{such that}\quad|(\Lambda^\dagger X_3\Lambda)_{m:j}|^2(p_{j+|m|}-p_j)^2>0.
    \label{eq:proof_41_2}
\end{split}
\end{align}
Together with \eqref{eq:assumption_thm1_prop}, assumption \eqref{eq:proof_41_2} implies
\begin{align}
\begin{split}
    \sum_{m=-n+1}^{n-1}\sum_{j=1}^{n-|m|}|(\Lambda^\dagger X_3&\Lambda)_{m:j}|^2(p_{j+|m|}-p_j)(w_{j+|m|}-w_j)\\
    &>-6\sum_{m=-n+1}^{n-1}\sum_{j=1}^{n-|m|}|(\Lambda^\dagger X_3\Lambda)_{m:j}|^2(p_{j+|m|}-p_j)^2.
    \label{eq:proof_41_3}
\end{split}
\end{align}
Applying again Lemma \ref{lemma:klas}, we observe that the right-hand side of \eqref{eq:proof_41_3} is equal to $-6\|[R\cdot P_0,X_3]\|^2$.
Taking this fact into account, combining \eqref{eq:proof_41_1} and \eqref{eq:proof_41_3} we infer
\begin{equation}
    \langle[R\cdot W_0,X_3],[R\cdot P_0,X_3]\rangle>-6\|[R\cdot P_0,X_3]\|^2.
    \label{eq:proof_41_B}
\end{equation}
Next, the anti-cyclic property of the trilinear form $\langle[\cdot,\cdot],\cdot\rangle$ and the fact that $[X_i,X_j]=\hbar\epsilon_{ijk}X_k$ yield
\begin{equation}
    \mathbb{P}_1[R\cdot P_0,X_3]=\frac{3}{n}\sum_{\alpha=1}^3\langle[R\cdot P_0,X_3],X_\alpha\rangle X_\alpha=\frac{3\hbar}{n}\biggl(\langle X_2,R\cdot P_0\rangle X_1-\langle X_1,R\cdot P_0\rangle X_2\biggr).
    \label{eq:proof_41_4}
\end{equation}
Since $[\mathbb{P}_1R\cdot P_0,X_3]=0$, we have that $\mathbb{P}_1(R\cdot P_0)=a_3X_3$ for some $a_3\in\R$.
This information, together with the fact that $X_1$ and $X_2$ belong to the first eigenspace of $\Delta_n$, gives
\begin{align}
\begin{split}
    \langle X_2,R\cdot P_0\rangle X_1-\langle X_1,R\cdot P_0\rangle X_2=&\langle X_2,\mathbb{P}_1(R\cdot P_0)\rangle X_1-\langle X_1,\mathbb{P}_1(R\cdot P_0)\rangle X_2\\
    =&a_3\underbrace{\langle X_2,X_3\rangle X_1}_{=0}-a_3\underbrace{\langle X_1,X_3\rangle}_{=0}X_2=0.
    \label{eq:proof_41_5}
\end{split}
\end{align}
Combining \eqref{eq:proof_41_4} and \eqref{eq:proof_41_5} we get $\mathbb{P}_1[R\cdot P_0,X_3]=0$, whence
\begin{equation}
    \langle\Delta_n[R\cdot P_0,X_3],[R\cdot P_0,X_3]\rangle\leq-6\|[R\cdot P_0,X_3]\|^2.
    \label{eq:proof_41_C}
\end{equation}
Finally, combining \eqref{eq:proof_41_A}, \eqref{eq:proof_41_B} and \eqref{eq:proof_41_C} we get a contradiction, thus \eqref{eq:proof_41_2} does not hold true.
That is, in view of Lemma \ref{lemma:klas}, $[R\cdot P_0,X_3]=0$.
Since the operator $\Delta_n$ commutes with both $R\cdot$ and $[X_3,\cdot]$, we also have $[R\cdot W_0,X_3]=0$.
This concludes the Proof of Proposition \ref{prop:rigidity_-6}.

\subsection{Proof of Proposition \ref{prop:rigidity_-2}} \label{subsec:f_minus_2}

Let $W_0,P_0$ be a steady state \eqref{eq:Zeitlin_steady}.
Using the characterisation of the Hoppe--Yau Laplacian in terms of the generators $X_\alpha$'s, and the anti-cyclic property of the trilinear form $\langle[\cdot,\cdot],\cdot\rangle$. we can write
\begin{equation}
    \|\Delta_nP_0\|^2=\langle\Delta_nP_0,\Delta_nP_0\rangle=\frac{1}{\hbar^2}\sum_{\alpha=1}^3\langle[X_\alpha,[X_\alpha,P_0]],\Delta_nP_0\rangle=-\frac{1}{\hbar^2}\sum_{\alpha=1}^3\langle[X_\alpha,W_0],[X_\alpha,P_0]\rangle.
    \label{eq:proof_42_1}
\end{equation}
Next, notice that we can appeal to Lemma \ref{lemma:klas} to compute the right-hand side of \eqref{eq:proof_42_1}:
\begin{equation}
    \sum_{\alpha=1}^3\langle[X_\alpha,W_0],[X_\alpha,P_0]\rangle=\frac{1}{n}\sum_{\alpha=1}^3\sum_{m=-n+1}^{n-1}\sum_{j=1}^{n-|m|}|(\Lambda^\dagger X_\alpha \Lambda)_{m:j}|^2(p_{j+|m|}-p_j)(w_{j+|m|}-w_j),
    \label{eq:proof_42_2}
\end{equation}
where $\{\mathrm{i}p_j\}_{j=1}^n$ are the eigenvalues of $P_0$ and $\{\mathrm{i}w_j\}_{j=1}^n$ are the eigenvalues of $W_0$, coherently with the notation of Lemma \ref{lemma:klas}.
By contradiction, assume that
\begin{align}
\begin{split}
    \exists(\alpha,m,j)\in\{1,2,3\}\times\{-n+1,\dots,n-1\}\times\{1,\dots,n\}\\
    \text{with }j\leq n-|m|\quad\text{such that}\quad|(\Lambda^\dagger X_\alpha\Lambda)_{m:j}|^2(p_{j+|m|}-p_j)^2>0.
    \label{eq:to_reach_contrd}
\end{split}
\end{align}
Together with \eqref{eq:min>-2}, assumption \eqref{eq:to_reach_contrd} implies
\begin{align}
\begin{split}
    \sum_{\alpha=1}^3\sum_{m=-n+1}^{n-1}\sum_{j=1}^{n-|m|}|(\Lambda^\dagger &X_\alpha \Lambda)_{m:j}|^2(p_{j+|m|}-p_j)(w_{j+|m|}-w_j)\\
    &>-2\sum_{\alpha=1}^3\sum_{m=-n+1}^{n-1}\sum_{j=1}^{n-|m|}|(\Lambda^\dagger X_\alpha \Lambda)_{m:j}|^2(p_{j+|m|}-p_j)^2.
\label{eq:proof_42_3}
\end{split}
\end{align}
Applying again Lemma \ref{lemma:klas}, we observe that the right-hand side of \eqref{eq:proof_42_3} is equal to $-2\sum_{\alpha=1}^3\|[X_\alpha,P_0]\|^2$.
Taking this fact into account, combining \eqref{eq:proof_42_1}, \eqref{eq:proof_42_2} and \eqref{eq:proof_42_3} we infer
\begin{equation}
    \|\Delta_nP_0\|^2<\frac{2}{\hbar^2}\sum_{\alpha=1}^3\|[X_\alpha,P_0]\|^2.
    \label{eq:proof_42_4}
\end{equation}
Finally, to compute the right-hand side of \eqref{eq:proof_42_4} we appeal again to the properties of the trilinear form $\langle[\cdot,\cdot],\cdot\rangle$ and the expression for $\Delta_n$ in terms of the $X_\alpha$'s.
This gives
\begin{align}
\begin{split}
    \frac{2}{\hbar^2}\sum_{\alpha=1}^3\|[X_\alpha,P_0]\|^2=&\frac{2}{\hbar^2}\sum_{\alpha=1}^3\langle[X_\alpha,P_0],[X_\alpha,P_0]\rangle\\
    =&-\frac{2}{\hbar^2}\sum_{\alpha=1}^3\langle[X_\alpha,[X_\alpha,P_0]],P_0\rangle=-2\langle\Delta_nP_0,P_0\rangle.
    \label{eq:proof_42_5}
\end{split}
\end{align}
Expanding in terms of the orthonormal basis $\{T_{l,m}^n\}$ of $\mathfrak{su}(n)$ \cite{Hoppe1982}, we observe that
\begin{align}
\begin{split}
    -2\langle\Delta_nP_0,P_0\rangle=&\sum_{l=1}^{n-1}2l(l+1)\sum_{m=-l}^l|\langle P_0,T_{l,m}^n\rangle|^2\\
    \leq&\sum_{l=1}^{n-1}l^2(l+1)^2\sum_{m=-l}^l|\langle P_0,T_{l,m}^n\rangle|^2=\|\Delta_nP_0\|^2.
    \label{eq:proof_42_6}
\end{split}
\end{align}
Combining \eqref{eq:proof_42_4}, \eqref{eq:proof_42_5} and \eqref{eq:proof_42_6}, we reach a contradiction.
Hence, \eqref{eq:to_reach_contrd} does not hold true, i.e., $[X_\alpha,P_0]=0$ appealing again to Lemma \ref{lemma:klas}.
Since this holds for all $\alpha=1,2,3$, we infer $W_0=\Delta_nP_0=0$.
This concludes the proof of Proposition \ref{prop:rigidity_-2}.

\section*{Acknowledgments}
We would like to thank Michele Coti Zelati, Boris Khesin, Michael Roop, and Erik Wahlén for guidance and stimulating discussions in the early stages of this project.
The research of LM was supported by EPSRC under grant number EP/Y03533X/1, with additional support from the Department of Mathematics at Imperial College London.
The research of KM was supported by the Swedish Research Council (grant number 2022-03453), the Knut and Alice Wallenberg Foundation (grant number KAW2024.0229), and the Göran Gustafsson Foundation for Research in Natural Sciences and Medicine.

\bibliographystyle{abbrvnat}
\bibliography{biblio}
\end{document}